\numberwithin{equation}{section}
\mathchardef\emptyset="001F
\newtheorem{Theorem}{Theorem}[section]
\newtheorem{Definition}[Theorem]{Definition}
\newtheorem{Corollary}[Theorem]{Corollary}
\newtheorem{Lemma}[Theorem]{Lemma}
\newcommand{\nada}[1]{}
\newcommand{\eps}{\varepsilon}
\newcommand{\grad}{\nabla}
\newcommand{\mres}{\mathbin{\vrule height 1.6ex depth 0pt width
0.13ex\vrule height 0.13ex depth 0pt width 1.3ex}}
\newcommand{\N}{\numberset{N}} 
\newcommand{\numberset}{\mathbb}
\newcommand{\Om}{\Omega} 
\newcommand{\R}{\numberset{R}}
\theoremstyle{definition}
\newtheorem{Remark}[Theorem]{Remark}
 \title{An $L^\infty$-variational problem involving the Fractional Laplacian
}
\author{
Simone Carano and Roger Moser}
\date{}
\begin{document}
\maketitle

\begin{abstract} 
For $s\in(0,1)$ and an open bounded set $\Om\subset\R^n$, we prove existence and uniqueness of absolute minimisers of the supremal functional 
$$E_\infty(u)=\|(-\Delta)^s u\|_{L^\infty(\R^n)},$$
where $(-\Delta)^s$ is the Fractional Laplacian of order $s$ and $u$ has prescribed Dirichlet data in the complement of $\Om$. We further show that the minimiser $u_\infty$ satisfies the (fractional) PDE 
$$
(-\Delta)^s u_\infty=E_\infty(u_\infty)\,\mathrm{sgn}f_\infty \qquad\mbox{in }\Om,
$$
for some analytic function $f_\infty\in L^1(\Om)$ obtained as the restriction of an $s$-harmonic measure $\mu$ in $\Om$. 
\end{abstract}
\noindent
{\bf 2020 MSC:} 49K20; 35B38; 35R11; 49J27; 49J45.\\
{\bf Key words and phrases:} Calculus of Variations in $L^\infty$; Euler-Lagrange equations; Fractional Laplacian; Gamma Convergence; $s$-harmonic measures.

\section{Introduction}\label{sec:introduction}

For $n\in\N$ and $s\in(0,1)$, consider the supremal functional
\begin{align}\label{E infty}
E_{\infty}(u):=\|(-\Delta)^su\|_{L^\infty(\R^n)},
\end{align}
where $u$ belongs to the Fréchet potential space
$$
\mathcal{W}^{2s,\infty}(\R^n):=\bigcap_{1<p<\infty}\Big\{u\in W^{s,p}(\R^n):(-\Delta)^su\in L^\infty(\R^n)\Big\}.
$$
In the above, $(-\Delta)^su$ is the $s$-Laplacian of $u$, which is defined by
$$
(-\Delta)^s u(x):=c_{n,s}\,\int_{\R^n}\frac{u(x)-u(y)}{|x-y|^{n+2s}}dy\qquad\forall x\in\R^n,
$$
for $u$ smooth in $\R^n$. 
The previous integral is understood in the Cauchy principal value sense and $c_{n,s}:=\frac{s2^{2s\Gamma(\frac n2+s)}}{\pi^{n/2}\Gamma(1-s)}$, where $\Gamma$ denotes the Euler Gamma function. With this choice, the operator $(-\Delta)^s$ approaches the classical Laplacian as $s\to1^-$.\\
For $u\in W^{s,2}(\R^n)$, $(-\Delta)^su$ is understood in the usual weak sense as an element of $W^{-s,2}(\R^n)$. Thus, when $u\in \mathcal{W}^{2s,\infty}(\R^n)$, we are assuming that $(-\Delta)^su$ is represented by an $L^\infty$-function.\\
We aim to show existence and uniqueness of global minimisers of \eqref{E infty} given Dirichlet conditions on the complement of a fixed bounded open subset $\Om\subset\R^n$. Explicitely, the problem makes sense for exterior data $u_0\in\mathcal{W}^{2s,\infty}(\R^n)$ and the competitor class
$$
\mathcal{W}^{2s,\infty}_{u_0}(\Om):=u_0+\mathcal{W}^{2s,\infty}_0(\Om)
$$
where $\mathcal{W}^{2s,\infty}_0(\Om):=\{u\in\mathcal{W}^{2s,\infty}(\R^n): u=0 \,\mbox{ in } \R^n\setminus\Om\}$, leading to the minimum problem
$$
\min_{u\in \mathcal{W}^{2s,\infty}_{u_0}(\Om)} E_\infty(u)=\min_{u\in\mathcal{W}^{2s,\infty}_{u_0}(\Om)}\|(-\Delta)^su\|_{L^\infty(\R^n)}.
$$

 In addition, we will show that the minimiser satisfies a certain (fractional) PDE. The non-local nature of the operator makes it necessary to consider the behaviour of the minimiser on the whole $\R^n$ instead of $\Omega$. This is in contrast with the case when $s=1$, corresponding to the classical Laplacian, studied in \cite{KM}, where the sup norm is taken on the domain $\Om$. Another important difference with the local case is that no assumptions are required on the regularity of the boundary of $\Om$.  \\
\indent
Before entering into the details of our main result, we briefly describe the mathematical context of our problem. We are dealing with an $L^\infty$-variational problem of fractional differential order $2s$. The state of the art for integer order operators is well established: for the first order, most of the challenges have been succesfully addressed (see \cite{A} for the seminal work of Aronsson and \cite{Kbook} for a survey reference); the higher order case, recently started in \cite{KM,KP2}, is largely in development and still presents many open questions, since most of the approaches used in the first order case do not generalise. Without any pretension of being exhaustive, we refer also to  \cite{DK,KM2,KM3}  for a glimpse of the literature on higher order problems. On the other hand, the fractional order case seems to be largely unexplored. In \cite{CLM} (see also \cite{BCC}), the authors study a notion of infinity Fractional Laplacian, generalising the Aronsson equation to the fractional setting. This PDE is obtained as a limit as $p\to\infty$ of the Euler-Lagrange equation associated to the minimisation problem for the $W^{s,p}$-norm. More broadly, the asymptotic behaviour as $p\to\infty$ of relevant energies of functions in $W^{s,p}$ has been extensively studied in the literature (see e.g. \cite{BPS,AMRT, SR, LL}). However, the PDEs arising in these works are not related (at least in an obvious way) to any $L^\infty$-Dirichlet problem of fractional order. For this reason, our approach proceeds in the opposite direction: we begin by considering an $L^\infty$
 variational problem of fractional order and aim to derive a fractional PDE that characterises its (unique) minimiser. Supremal functionals involving nonlocal operators have been studied in \cite{Sc}, where the author establishes existence of minimisers for supremal functionals depending on the Riesz fractional gradient, without addressing uniqueness or deriving associated Euler–Lagrange-type equations. In this sense, our work can be viewed as a novel contribution to this framework, as it provides both uniqueness and characterisation of the minimiser via the corresponding governing fractional PDE.\\
\indent
Now we proceed to specify the setting of our problem. 
We assume that the prescribed data $u_0$ belong to $C_c^{2s+\gamma}(\R^n)$\footnote{We refer to Section \ref{sec:not} for the precise definition of the H\"older spaces $C^\alpha$, $\alpha\in\R^+$.}, for some $\gamma>0$. We could assume milder decay at infinity, however, the aim of this paper is to apply the methods of $L^\infty$-Calculus to the fractional setting, to give further evidence of the robustness of the techniques. For this purpose, in order to keep the technicality as light as possible, we keep the $C^{2s+\gamma}_c$-assumption. Notice that such $u_0$ belongs to $\mathcal{W}^{2s,\infty}(\R^n)$. Indeed, we have $(-\Delta)^s u_0\in C^{\gamma}_0(\R^n)$, with polynomial decay at infinity (see for instance \cite[Lemma 3.5]{Bo} for decay estimates and \cite[Prop. 2.6]{Si} for H\"older estimates). Also, from the definition of the Gagliardo seminorm (see Section \ref{sec:not}), it is easy to see that if $\sigma>s$, then $C^{\sigma}_c(\R^n)\subset W^{s,p}(\R^n)$ for all $p>1$. \\
On the other hand, by \cite[Prop. 2.9]{Si} and Morrey's inequality \cite[Thereom 8.2]{DPV}, we have $\mathcal{W}^{2s,\infty}(\R^n)\subset C^\alpha(\R^n)$ for every $\alpha<2s$. Thus, the regularity assumption on the data $u_0$ is not overly restrictive. \\
\indent
Now we can state our main result.
\begin{Theorem}\label{main thm}
Fix $s\in (0,1)$ and $n\in\N$, $n>2s$. Let $\Om\subset\R^n$ be an open bounded set and $E_\infty$ be the functional defined in \eqref{E infty}. Fix $u_0\in C^{2s+\gamma}_c(\R^n)$, for some $\gamma>0$, with $u_0\not\equiv0$ in $\R^n\setminus\Om$.
Then the problem
$$
e_\infty:=\inf_{\mathcal{W}_{u_0}^{2s,\infty}(\Om)}E_\infty
$$
is uniquely solvable, namely there exists a unique $u_\infty\in\mathcal{W}_{u_0}^{2s,\infty}(\Om)$ such that $E_\infty(u_\infty)=e_\infty$. 
In particular, $(-\Delta)^s u_\infty\in C^\gamma_{\mathrm{loc}}(\R^n\setminus\overline\Om)$ and $(-\Delta)^su_\infty(x)\to0$ as $|x|\to+\infty$.\\
Moreover, a fractional PDE can be derived as a necessary and sufficient condition for the minimality of $u_\infty$. Explicitely, there exists a measure $\mu\in\mathcal{M}(\R^n)$, $\mu\neq0$, with compact support and $|\mu|(\R^n)\leq1$, such that $\mu$ is $s$-harmonic in $\Om$ and 
\begin{align}\label{PDE u infty mu}
(-\Delta)^su_\infty=e_\infty\frac{d\mu}{d|\mu|}\qquad\mbox{in supp}|\mu|\setminus\partial\Om.
\end{align}
The identity above is understood between $L^\infty$-functions on supp$|\mu|\setminus\partial\Om$.\\
Moreover, the restriction $\mu\mres\Om$ is absolutely continuous w.r.t. the Lebesgue measure on $\Om$, i.e. $\mu\mres\Om=f_\infty\mathscr{L}^n\mres\Om$, for some function $f_\infty\in L^1(\Om)\setminus\{0\}$, which is real analytic in $\Om$. In particular, there holds
\begin{align}\label{PDE u infty}
(-\Delta)^su_\infty=e_\infty\mathrm{sgn}f_\infty\qquad\mbox{a.e.  in }\,\Om .
\end{align}
\end{Theorem}
In the above, a Radon measure $\mu$ in $\R^n$ with finite total variation is said to be $s$-harmonic on $\Om$ if
$$
\int_{\R^n}(-\Delta)^s \varphi \,d\mu=0,\qquad\forall\varphi\in C^\infty_c(\Om).
$$ 
Notice that the previous integral is well defined since $(-\Delta)^s\varphi\in C_0(\R^n)$, because $\varphi$ has compact support. \\
\indent
The non trivial choice of the ``boundary data" $u_0$ prevents that the variational problem trivialises, as we will show in the proof of Theorem \ref{main thm}.\\
\indent
 We prove existence of minimisers via $L^p$-approximation (i.e. Gamma Convergence), which is a very common technique in this context. The main idea is to consider and solve a variational problem for the (suitably normalised) $p$-norm of the Fractional Laplacian instead of the functional \eqref{E infty}. Once the existence of a minimiser $u_p$ of this ``$p$-version" of the problem is established, we recover the original problem for $E_\infty$ in the limit as $p\to\infty$ and, thanks to compactness properties, we show that $u_p\to u_\infty$ in the appropriate convergence, where $u_\infty$ is a minimiser of $E_\infty$.  This proof works under the natural assumption that $u_0\in\mathcal{W}^{2s,\infty}(\R^n)$. On the other hand, the additional $C^{2s+\gamma}_c$-regularity assumption on $u_0$ is needed
in the derivation of the PDE in \eqref{PDE u infty mu}. Also in this case, we argue by $L^p$-approximation, precisely by passing to the limit as $p\to\infty$ in the Euler-Lagrange equation satisfied by $u_p$. The relevant function $f_p$ that appears in this PDE naturally inherits $s$-harmonicity properties and a uniform bound on the mass, as consequences of the minimality of $u_p$. This gives rise to the $s$-harmonic measure $\mu$, as limit of $f_p$ for $p\to\infty$. At this point, a technical argument based on the $C^{2s+\gamma}_c$-regularity of $u_0$ ensures that $\mu$ is represented by a non-zero analytic function on $\Om$.
\\
\indent
The uniqueness of $u_\infty$ is a consequence of the fact that it satisfies \eqref{PDE u infty}. To be more precise, the crucial point in proving uniqueness is to guarantee that \textit{any} minimiser $u$ satisfies $|(-\Delta)^su|=e_\infty$ in $\Omega$. This is suprisingly in complete analogy with the local case (see \cite{KM,CKM}), despite the non-local nature of the operator $(-\Delta)^s$. In fact, the argument of the proof is similar to the one in \cite{CKM}, inspired in turn to a technique in \cite{M}, and consists in the insertion of a penalisation term in the $L^p$-approximation problem, forcing the approximating minimiser to annul the penalisation and recover the preselected minimiser of the $L^\infty$-problem in the limit. Moreover, in general, the uniqueness property of a global minimiser ensures that it is also an \textit{absolute minimiser} (see for instance \cite[Corollary 1.4]{CKM}).\\
\indent
The behaviour of the measure $\mu$ on $\partial\Om$ remains an open question. However, we believe that, as for the local case \cite{KM}, it should be possible to rule out concentration phenomena at the boundary by constructing suitable test functions for the Euler-Lagrange equation satisfied by $u_p$. Such a result may require regularity assumptions on the boundary. \\
\indent
Finally, we observe that if we restrict equation \eqref{PDE u infty mu} to $\R^n\setminus\overline\Om$, we deduce that, thanks to the continuity of $(-\Delta)^su_\infty$, the measure $\mu$ must have constant sign on each connected component of supp$|\mu|$. However, at the present stage, we are not able to ensure the non-emptyness of supp$|\mu|$ on $\R^n\setminus\overline\Om$. More in general, to our best knowlegde, behaviour and regularity properties of $s$-harmonic measures (outside their $s$-harmonicity domain) are largely unexplored, motivating further investigation.

\section{Notation}\label{sec:not}
In the whole paper, $s\in(0,1)$ and $n\in\N$, $n>2s$, are fixed numbers. \\
For $k\in\N\cup\{\infty\}$, we denote by $C_c^k(\R^n)$ and $C_0^k(\R^n)$ respectively the space of $C^k$-functions with compact support and the space of $C^k$-functions vanishing at infinity with all their derivatives up to order $k$. If $k=0$, we write simply $C_c(\R^n)$ and $C_0(\R^n)$.\\
 Let $U\subseteq\R^n$ be an open set and $\alpha\in(0,1)$. We denote by $C^{\alpha}(U)$ the space of H\"older continuous functions of order $\alpha$ in $U$. By $C^{\alpha}_{\mathrm{loc}}(U)$ we refer to the space of locally H\"older continuous functions of order $\alpha$ in $U$, i.e. we say that $u\in C^{\alpha}_{\mathrm{loc}}(U)$ if for every $x\in U$ there exists a neighborhood $B$ of $x$ such that $u\in C^{\alpha}(B)$. More in general, if $\alpha\in\R^+\setminus\N$, we denote by $C^\alpha(U)$ (resp. $C_{\mathrm{loc}}^\alpha(U)$ ) the space of $\lfloor\alpha\rfloor$ times (resp. locally) differentiable functions with derivatives of order $\lfloor\alpha\rfloor$ being (resp. locally) H\"older continuous of order $\alpha-\lfloor\alpha\rfloor$.\\
Suppose that $\gamma>0$ is such that $2s+\gamma$ is not an integer and $u\in C^{2s+\gamma}_{\mathrm{loc}}(\R^n)\cap L^1_s(\R^n)$. 
The space $L^1_s(\R^n)$ is the set of Lebesgue measurable functions $u:\R^n\to\R$ with 
$$
\int_{\R^n}\frac{|u(x)|}{1+|x|^{n+2s}}dx<\infty.
$$
Then, the $s$-Laplacian of $u$ is pointwise defined at every $x\in\R^n$ as
$$
(-\Delta)^s u(x):=c_{n,s}\,\mathrm{P.V.}\,\int_{\R^n}\frac{u(x)-u(y)}{|x-y|^{n+2s}}dy:=c_{n,s}\lim_{\eps\to0}\,\int_{\R^n\setminus B_\eps(x)}\frac{u(x)-u(y)}{|x-y|^{n+2s}}dy,
$$
where
$c_{n,s}:=\frac{s2^{2s\Gamma(\frac n2+s)}}{\pi^{n/2}\Gamma(1-s)}$ and $\Gamma$ denotes the Euler Gamma function.\\
For $1\leq p<\infty$ and an open set $U\subseteq\R^n$, we define the fractional Sobolev space $W^{s,p}(U)$ as
$$
W^{s,p}(U)=\left\{u\in L^p(U): [u]_{W^{s,p}(U)}<\infty\right\},
$$
where $[\cdot]_{W^{s,p}(U)}$ stands for the Gagliardo seminorm, namely
$$
[u]_{W^{s,p}(U)}:=\left(\int_U dx\int_U\frac{|u(x)-u(y)|^p}{|x-y|^{n+sp}}dy\right)^{\frac1p}.
$$
Also, $W^{s,p}(U)$ is a Banach space, with the norm
$$
\|u\|_{W^{s,p}(U)}:=\|u\|_{L^p(U)}+[u]_{W^{s,p}(U)}.
$$
When $p=2$ we use the notation $H^s(U)$ to refer to the Hilbert space $W^{s,2}(U)$.\\
For $u\in H^s(\R^n)$, we can define $(-\Delta)^su$ in the weak sense as an element of the dual space $H^{-s}(\R^n)$:
$$
\langle(-\Delta)^su,v\rangle:=\frac{c_{n,s}}2\int_{\R^n\times\R^n}\frac{(u(x)-u(x))(v(x)-v(y))}{|x-y|^{n+2s}}dxdy\qquad\forall v\in H^s(\R^n).
$$
For $\infty\geq p\geq (2_s)_*:= \frac{2n}{n+2s}$, we write $(-\Delta)^su\in L^p(\R^n)$ if there exists $g\in L^p(\R^n)$ such that
$$
\langle(-\Delta)^su,v\rangle=\int_{\R^n} gv\qquad\forall v\in H^s(\R^n),
$$
where we used that $H^s(\R^n)\subset L^p(\R^n)$, for all $2\leq p\leq 2_s^*=\frac{2n}{n-2s}$.\\
If $2s>1$, write $2s=1+\sigma$. Then, for $p\in[1,\infty)$, we define $$W^{2s,p}(U):=\{u\in W^{1,p}(U): Du\in W^{\sigma,p}(U)\},$$ equipped with the corresponding suitable norm $\|\cdot\|_{W^{2s,p}(U)}$.\\
Finally, for a bounded open set $\Omega\subset\R^n$ and $p\in (1,\infty)$, we denote
$$
W_0^{s,p}(\overline \Om):=\{u\in W^{s,p}(\R^n): u=0 \mbox{ in }\R^n\setminus\Om\}.
$$
In particular, we write $H_0^s(\overline\Om):=W^{s,2}_0(\overline\Om)$. If $\Omega$ has $C^2$-boundary, then $H^s_0(\overline\Om)$ coincide with the closure of $C^\infty_c(\Om)$ under the norm $\|\cdot\|_{H^s(\R^n)}$.

\section{Properties of weak solutions to the fractional Dirichlet problem}\label{prel}
In this section, we recall some useful results about weak and very weak solutions to the Dirichlet problem for the Fractional Laplacian. We start by recalling the notion of weak solution.
\begin{Definition}\label{def weak sol}
For a bounded open set $\Om\subset\R^n$ and $s\in(0,1)$, let $f\in H^{-s}(\overline\Om):=(H_0^s(\overline\Om))^*$. A function $u\in H^s_0(\overline\Om)$ is a weak solution to
\begin{equation}\label{dir}
\left\{
\begin{aligned}
(-\Delta)^s u&=f\qquad&&\mbox{in }\Om,\\
u&=0&&\mbox{in }\R^n\setminus\Om,
\end{aligned}
\right.
\end{equation}
if for every $v\in H^s_0(\overline\Om)$ it holds
$$
\frac{c_{n,s}}2\int_{\R^n\times\R^n}\frac{(u(x)-u(x))(v(x)-v(y))}{|x-y|^{n+2s}}dxdy=\langle f,v \rangle_{H^{-s}(\overline\Om),H^s_0(\overline\Om)},
$$
where $\langle \cdot,\cdot \rangle_{H^{-s}(\overline\Om),H^s_0(\overline\Om)}$ stands for the inner product in the duality between $H^s_0(\overline\Om)$ and $H^{-s}(\overline\Om)$.
\end{Definition}
Existence and uniqueness of weak solutions to \eqref{dir} are simple consequence of Lax-Milgram's Theorem (see \cite[Prop. 2.1]{BWZ}). The unique weak solution to \eqref{dir} enjoys local $L^p$-regularity: if $f\in L^p(\Om)$, $p\geq2$, then $u\in W^{2s,p}_{\mathrm{loc}}(\Om)$ (see e.g. \cite[Theorem 1.3]{BWZ}). Moreover, global $L^p$-estimates of Calder\'on-Zygmund type are available, as recently proved in \cite[Corollary 1.7]{AFLY}, which we state in the following, more particular, version\footnote{The notion of weak solution of Definition \ref{def weak sol} is more stronger (and then compatible) with the notion of weak solution considered in \cite{AFLY} (at least for $C^2$-domain, see e.g. \cite[Def. 1.2]{SP}).}.
\begin{Theorem}[Fractional Calder\'on-Zygmund estimates]\label{CZ thm}
Let $\Om\subset\R^n$ be a bounded domain with boundary of class $C^2$ and $s\in(0,1)$. Let $f\in L^m(\Om)$ and $u$ be the unique (weak) solution to \eqref{dir}. Then, we have:
\begin{itemize}
\item if $1\leq m\leq \frac ns$, then for all $1<p<m^*_s:=\frac{nm}{n-ms}$ there exists $C>0$ such that
$$
\|u\|_{W^{s,p}(\R^n)}\leq C\|f\|_{L^m(\Om)};
$$
\item if $m>\frac ns$, then for all $1<p<\infty$ there exists $C>0$ such that
$$
\|u\|_{W^{s,p}(\R^n)}\leq C\|f\|_{L^m(\Om)}.
$$
\end{itemize}
Here, $C > 0$ are positive constants depending only on $n, s, p, m$ and $\Om$.
\end{Theorem}
\begin{Remark}
We observe that, since $m^*_s>m$, we can choose $p=m$ in Theorem \ref{CZ thm}, obtaining an estimate of the type $W^{s,p}(\R^n)-L^p(\Om)$ for all $p\in (1,\infty)$:
$$
\|u\|_{W^{s,p}(\R^n)}\leq C\|f\|_{L^p(\Om)},
$$
for a constant $C=C(n,s,p,\Om)>0$.
Although always refering to Theorem \ref{CZ thm}, we will use only this estimate throughout the whole paper.
\end{Remark}

Now, we introduce the notion of \textit{very weak solution} to the Fractional Laplace problem. First, we define the space
$$
L^\infty_s(\R^n):=\{u\in L^\infty(\R^n): \|(1+|\cdot|^{n+2s})|u|\|_{L^\infty(\R^n)}<\infty\}.
$$
\begin{Definition}\label{very weak def}
Let $\Om\subset\R^n$ be open and $g\in L^1_s(\R^n)$. A function $u\in L^1_s(\R^n)$ is a very weak solution to
\begin{equation}\label{lap}
\left\{
\begin{aligned}
(-\Delta)^s u&=0\qquad&&\mbox{in }\Om,\\
u&=g&&\mbox{in }\R^n\setminus\Om,
\end{aligned}
\right.
\end{equation}
if for every $\varphi$ with compact support in $\Om$, such that $(-\Delta)^s\varphi\in L^\infty_s(\R^n)$, it holds
\begin{equation}\label{very weak}
\left\{
\begin{aligned}
&\int_{\R^n}u(-\Delta)^s \varphi=0,\\
&u=g\qquad\mbox{in }\R^n\setminus\Om.
\end{aligned}
\right.
\end{equation}
\end{Definition}

\begin{Remark}\label{rem on class}
In the definition above, the regularity of the tests $\varphi$ is tacitly assumed to be $H^s(\R^n)$. 
 In the literature, it is sometimes required that $\varphi$ has $L^\infty(\R^n)$ or $C^s(\R^n)$ regularity. However, since $(-\Delta)^s\varphi\in L^\infty_s(\R^n)$, all these assumptions amount to the same class, which are the functions in $W^{2s,p}_0(\overline\Om)$, for all $p\in [2,\infty)$, with $(-\Delta)^s\varphi\in L^\infty_s(\R^n)$.\\ Indeed, assume $\varphi\in L^2(\R^n)$, with $\varphi=0$ in $\R^n\setminus\Om$ and (distributional) $s$-Laplacian $(-\Delta)^s\varphi\in L^\infty_s(\R^n)$. Then, since $\varphi=0$ outside of $\Om$, one has $(-\Delta)^s\varphi$ is smooth outside of $\Omega$, decaying as $|x|^{-n-2s}$ at infinity. Therefore, $(-\Delta)^s\varphi\in L^1(\R^n)$ as well, so that $(-\Delta)^s\varphi\in L^p(\R^n)$ for all $p\in[1,\infty]$. Now recall that (see e.g. \cite[Corollary 4.56]{ADV}) $$\{\varphi\in L^2(\R^n): (-\Delta)^s \varphi\in L^2(\R^n)\}\simeq W^{2s,2}(\R^n).$$ So, in particular, $\varphi\in H^s(\R^n)$ and $(-\Delta)^s \varphi$ is well defined also in the weak sense. By \cite[Theorem 2.4]{BWZ}), we conclude that
$\varphi\in W^{2s,p}(\R^n)$ for all $p\in [2,\infty)$.
\end{Remark}
The regularity of very weak solutions to \eqref{lap} can be improved locally to Sobolev and even to classical regularity. This result is contained in \cite{CCLP}, where the authors prove it on the unit ball of $\R^n$. Due to the local nature of the estimates, we may state the result for any open set $\Om\subset\R^n$.
\begin{Theorem}[Regularity of very weak solutions]\label{thm CCPL}
Let $\Om\subset\R^n$ be an open set and $s\in(0,1)$. Let $g\in L^1_s(\R^n)$ and $u$ a very weak solution to \eqref{lap}. Then, we have:
\begin{itemize}
\item $u\in H^s_\mathrm{loc}(\Om)$ and
$$
\|u\|_{H^s(B)}\leq C(B)\|u\|_{L^1_s(\R^n)}\qquad\forall B\Subset\Om;
$$
\item $u$ is real analytic in $\Om$ and
$$
\|D^\alpha u\|_{L^\infty(B')}\leq c^{|\alpha|}\alpha!C(B,B',n,s)\left(\|u\|_{L^\infty(B')}+\|u\|_{L^1_s(\R^n)}\right)\qquad\forall \alpha\in \N^n_0\quad\forall B'\Subset B\Subset\Om,
$$
for a universal constant $c>0$.
\end{itemize}
\end{Theorem}

\section{Proof of the main results}\label{sec:proofs}
We divide the proof of Theorem \ref{main thm} into four parts: We start with ensuring the positivity of $e_\infty$, then we show existence, PDE derivation, and uniqueness of minimisers respectively.
\subsection{Strict positivity of the minimum}\label{subsec strict}
Let us prove that under the assumption that $u_0\not\equiv0$ in $\R^n\setminus\Om$, we have $e_\infty>0$.\\ By seek of contradiction, assume that $e_\infty=0$. Then, there exists a sequence $(u_k)_k\subset\mathcal{W}^{2s,\infty}_{u_0}(\Om)$ with $E_\infty(u_k)\to 0$ as $k\to\infty$. Thus, 
\begin{align}\label{fk}
(-\Delta)^su_k\to0\qquad\mbox{ uniformly in } \R^n.
\end{align}
Set $v_k:=u_k-u_0$, then $v_k\in W^{s,p}_0(\overline\Om)$ and $f_k:=(-\Delta)^sv_k=(-\Delta)^su_k-(-\Delta)^su_0\in L^p_{\mathrm{loc}}(\R^n)$, for all $p>1$. In particular, if $\Om'\subset\R^n$ is an open bounded set with $C^2$-boundary such that $\Om\Subset\Om'$, we have $f_k\in L^p(\Om')$. Now, observe that $v_k$ is a weak solution to
\begin{equation}\nonumber
\left\{
\begin{aligned}
(-\Delta)^sv_k&=f_k\qquad&&\mbox{in }\Om',\\
v_k&=0 &&\mbox{in }\R^n\setminus\Om'.
\end{aligned}
\right.
\end{equation}
 By the Calder\'on-Zygmund estimates in Theorem \ref{CZ thm}, we have
\begin{align*}
\|v_k\|_{W^{s,p}(\R^n)}&\leq C\|f_k\|_{L^p(\Om')}\leq C\left(\|(-\Delta)^su_k\|_{L^p(\Om')}+\|(-\Delta)^su_0\|_{L^p(\Om')}\right)
\end{align*}
for a constant $C>0$ independent of $k$. From these estimates and \eqref{fk}, we obtain 
$$
\limsup_{k\to\infty}\|u_k\|_{W^{s,p}(\R^n)}\leq \|u_0\|_{W^{s,p}(\R^n)}+C\|(-\Delta)^su_0\|_{L^p(\Om')}.
$$
 Thus, $(u_k)_k$ is bounded in $W^{s,p}(\R^n)$, for all $p>1$. By the weak compactness of $W^{s,p}(\R^n)$, there exists $u\in W^{s,p}(\R^n)$ and a (non relabelled) subsequence of $u_k$ such that $u_k\rightharpoonup u$ weakly in $W^{s,p}(\R^n)$, as $k\to\infty$. Clearly, $u=u_0$ in $\R^n\setminus\Om$ and $(-\Delta)^s u=0$ in $\R^n$, therefore $u\in \mathcal{W}^{2s,\infty}_{u_0}(\Om)$.
But, since an entire $s$-harmonic function is necessarily affine (see \cite{F}) and $u\in L^p(\R^n)$ for every $p>1$, we must have $u=0$ in $\R^n$, that is a contradiction.\\

\subsection{Existence}\label{subs:existence}
Let $p>1$ and recall that $u_0\in\mathcal{W}^{2s,\infty}(\Om)$. Let $w\in C^2(\R^n)\cap C_0(\R^n)$ be such that $w>0$ and $\int_{\R^n}w=1$. Define the weighted potential space
$$
\mathscr{L}^{2s,p}_w(\R^n):=\left\{u\in W^{s,p}(\R^n):\int_{\R^n}\left|(-\Delta)^su\right|^pw<\infty\right\},
$$ 
equipped with the norm
$$
\|u\|_{\mathscr{L}^{2s,p}_w(\R^n)}:=\|u\|_{W^{s,p}(\R^n)}+\left(\int_{\R^n}\left|(-\Delta)^su\right|^pw\right)^{\frac1p}.
$$
This is a reflexive Banach space, since it is isometrically isomorphic to a closed subspace of $W^{s,p}(\R^n)\times L^p(\R^n)$. We want to consider its subspace
$$
\mathscr{L}^{2s,p}_{w,u_0}(\Om):=\{u\in\mathscr{L}^{2s,p}_w(\R^n):u=u_0\quad \mbox{in }\R^n\setminus\Om\}
$$
and minimise the functional
\begin{align}\label{Ep}
E_p(u):=\left(\int_{\R^n}\left|(-\Delta)^su(x)\right|^pw(x)dx\right)^{\frac{1}{p}}
\end{align}
on $\mathscr{L}^{2s,p}_{w,u_0}(\Om)$.
Notice that ${\mathcal W}^{2s,\infty}(\R^n)\subset\mathscr{L}^{2s,p}_{w}(\R^n)$, so, in particular, we have $E_p(u_0)<\infty.$\\
Let us prove that $E_p$ is (weakly) coercive in $\mathscr{L}^{2s,p}_{w,u_0}(\Om)$. Assume that $E_p(u)\leq M$ for some $u\in \mathscr{L}^{2s,p}_{w,u_0}(\Om)$. 
Set $v:=u-u_0\in W^{s,p}_0(\overline\Om)$ and $f:=(-\Delta)^sv=(-\Delta)^su-(-\Delta)^su_0\in L^p_{\mathrm{loc}}(\R^n)$, for all $p>1$. In particular, if $\Om'\subset\R^n$ is an open bounded set with $C^2$-boundary such that $\Om\Subset\Om'$, we have $f\in L^p(\Om')$. Thus, the function $v$ (weakly) solves the Dirichlet problem
\begin{equation}\nonumber
\left\{
\begin{aligned}
(-\Delta)^sv&=f\qquad&&\mbox{in }\Om',\\
v&=0 &&\mbox{in }\R^n\setminus\Om'.
\end{aligned}
\right.
\end{equation}
Applying the Calder\'on-Zygmund estimates in Theorem \ref{CZ thm}, we have
$$
\|v\|_{W^{s,p}(\R^n)}\leq C\|f\|_{L^p(\Om')},
$$
for some constant $C=C(\Om',s,n,p)$. From this estimate and the trivial inequality
$$
\|g\|_{L^p(\Om')}\leq{\big(\inf_{\Om'}w\big)}^{-\frac1p}\left(\int_{\R^n}\left|g\right|^pw\right)^{\frac{1}{p}}\qquad\forall g\in L^\infty(\R^n),
$$
we deduce
\begin{align*}
\|u\|_{W^{s,p}(\R^n)}&\leq C\left(\|(-\Delta)^s u\|_{L^p(\Om')}+\|(-\Delta)^su_0\|_{L^p(\Om')}+\|u_0\|_{W^{s,p}(\R^n)}\right)\\&\leq C\left(E_p(u)+\|u_0\|_{\mathscr{L}^{2s,p}_w(\R^n)}\right)
\\&\leq C\left(M+\|u_0\|_{\mathscr{L}^{2s,p}_w(\R^n)}\right),
\end{align*}
for a constant $C=C(\Om',s,n,p,w)$.
The right hand side provides an upper bound also for $\|u\|_{\mathscr{L}^{2s,p}_w(\R^n)}$, ensuring the coercivity of $E_p$.\\
Clearly, $E_p$ is weakly lower semicontinuous in $\mathscr{L}^{2s,p}_w(\R^n)$. 
Therefore, we can apply Direct Methods on $\mathscr{L}^{2s,p}_{w,u_0}(\Om)$ to get a minimiser $u_p$ of $E_p$ in this space.\\
Now fix $q>1$ and consider the sequence of (extended) functionals $(E_p)_p$ in $\mathscr{L}^{2s,q}_{w,u_0}(\Om)$ defined as
\begin{equation}\nonumber
E_p(u)=\left\{
\begin{aligned}
&\left(\int_{\R^n}|(-\Delta)^su(x)|^pw(x)\right)^{\frac{1}{p}}\!\!\!\!\!&,\quad&u\in \mathscr{L}^{2s,q}_{w,u_0}(\Om)\cap\mathscr{L}^{2s,p}_{w,u_0}(\Om),\\
&+\infty&,\quad&u\in \mathscr{L}^{2s,q}_{w,u_0}(\Om)\setminus \mathscr{L}^{2s,p}_{w,u_0}(\Om),
\qquad\end{aligned}
\right.
\begin{aligned}
\forall p>1.
\end{aligned}
\end{equation}
In a similar way, we can define $E_\infty$ in $\mathscr{L}^{2s,q}_{w,u_0}(\Om)$ as
\begin{equation}\nonumber
E_\infty(u)=\left\{
\begin{aligned}
&\|(-\Delta)^su\|_{L^\infty(\R^n)}\!\!\!\!\!&,\quad&u\in\mathcal{W}_{u_0}^{2s,\infty}(\Om),\\
&+\infty&,\quad&u\in \mathscr{L}^{2s,q}_{w,u_0}(\Om)\setminus\mathcal{W}_{u_0}^{2s,\infty}(\Om).
\end{aligned}
\right.
\end{equation}
  Using the H\"older inequality, it is not difficult to prove that $(E_p)_p$ is monotone increasing and
$$
\lim_{p\to\infty}E_p(u)=E_\infty(u)\qquad\forall u\in\mathscr{L}^{2s,q}_{w,u_0}(\Om).
$$
Let us show that $(E_p)_p$ is equi-coercive in $\mathscr{L}^{2s,q}_{w,u_0}(\Om)$ with respect to $p$. Assume $E_p(u)\leq M$ for all $p\geq q$. By fixing a domain $\Om'\Supset\Om$ with $C^2$-boundary, we can apply the fractional Calderon-Zygmund estimates as before. Together with the H\"older inequality, this yields
\begin{align*}
\|u\|_{\mathscr{L}^{2s,q}_{w}(\R^n)}&\leq C\left(\|u_0\|_{\mathscr{L}^{2s,q}_w(\R^n)}+E_q(u)\right)\\
&\leq C\left(\|u_0\|_{\mathscr{L}^{2s,q}_w(\R^n)}+E_p(u)\right)\\
&\leq C\left(\|u_0\|_{\mathscr{L}^{2s,q}_w(\R^n)}+M\right),
\end{align*}
for a constant $C=C(\Om',s,n,q,w)$,
giving the equi-coercivity of $(E_p)_p$. \\
Since $(E_p)_p$ is a monotone increasing sequence of weakly lower semicontinuous functionals
 on $\mathscr{L}^{2s,q}_{w,u_0}(\Om)$, its $\Gamma$-limit is given by (see \cite[Remark 2.12]{Br})
$$
\Gamma-\lim_p E_p=\lim_pE_p=E_\infty.
$$
By the Fundamental Theorem of Gamma Convergence \cite[Thm. 2.10]{Br}, if $(u_p)_p$ is a sequence of minimisers of $E_p$, up to passing to a subsequence, we have $u_p\rightharpoonup u_\infty$ weakly $\mathscr{L}^{2s,q}_{w,u_0}(\Om)$ where $u_\infty$ is a minimiser of $E_\infty$ on $\mathcal{W}_{u_0}^{2s,\infty}(\Om)$. By definition of $E_\infty$, we necessarily have $u_\infty\in\mathcal{W}_{u_0}^{2s,\infty}(\Om)$.
Furthermore, Theorem 2.10 in \cite{Br} also implies
\begin{align}\label{limit ep}
E_p(u_p)\to E_\infty({u_\infty}) \quad\mbox{as }p\to\infty.
\end{align}

\subsection{PDE derivation}\label{subs: PDE derivation}

Now we work towards the proof of \eqref{PDE u infty}.
Let us begin by writing down the Euler-Lagrange equation satisfied by $u_p$: 
\begin{align}\label{EL for Ep}
\int_{\R^n}f_p(x)(-\Delta)^sv(x)\, dx=0,\quad \forall v\in \mathscr{L}^{2s,p}_{w,0}(\Om),
\end{align}
where, for $e_p:=E_p(u_p)$, $f_p:\R^n\to\R$ is defined by
\begin{align}\label{fp}
f_p:=e_p^{1-p}w|(-\Delta)^su_p|^{p-2}(-\Delta)^su_p.
\end{align}
By the non-trivial choice of the boundary data $u_0$, we have $e_\infty>0$. Thus, we may assume without loss of generality that $e_p\geq e_1>0$ for all $p>1$. So, $f_p$ is a well defined measurable function on $\R^n$.\\
Let us prove that that $(f_p)_p$ is bounded in $L^1(\R^n)$: using the H\"older inequality w.r.t. the measure $w\mathscr{L}^n\mres\R^n$ and recalling that $\int_{\R^n}w=1$, we have
\begin{equation}\label{fp bdd}
\begin{aligned}
\int_{\R^n}|f_p|&=e_p^{1-p}\int_{\R^n}|(-\Delta)^su_p|^{p-1}w\\
&\leq e_p^{1-p}\left(\int_{\R^n}|(-\Delta)^su_p|^{p}w\right)^{\frac{p-1}{p}}\left(\int_{\R^n}w\right)^\frac1p\\
&=e_p^{1-p}e_p^{p-1}\cdot 1\\
&=1.
\end{aligned}
\end{equation}
Observe that by Remark \ref{rem on class}, a test function $\varphi$ in Definition \ref{very weak def} is such that $\varphi \in W^{2s,p}_0(\overline\Om)$. In particular, $\varphi\in\mathscr{L}^{2s,p}_{w,0}(\Om)$, so it is a good test function also for \eqref{EL for Ep}. Hence, $f_p$ is a {\it{very weak} $s$-harmonic} function on $\Om$. Equivalently, it is a very weak solution to the Fractional Laplace problem on $\Om$ w.r.t its own Dirichlet data. \\
Now we have all the tools to construct the function $f_\infty$ appearing in \eqref{PDE u infty}. Thanks to Theorem \ref{thm CCPL}, we have $f_p\in H^s_{\mathrm{loc}}(\Om)$ and the following estimate holds: for any open set $B\Subset\Om$ there exists a constant $C=C(B)>0$ such that
\begin{align}\label{Hs est}
\|f_p\|_{H^s(B)}\leq C\|f_p\|_{L^1(\R^n)}.
\end{align}
By \eqref{fp bdd} and compact Sobolev embeddings (see \cite[Corollary 7.2]{DPV}), if $n>2s$, there exist a subsequence of $(f_p)$, which we denote by $(f_p)$ as well, and a function $f_\infty$ such that
$$
f_p\to f_\infty \quad\mbox{in }L^q(B) \quad\forall q\in[1,2^*_s),
$$
where $2^*_s:=\frac{2n}{n-2s}$. Observe that, thanks to \eqref{fp bdd}, $f_\infty\in L^1(\Om)\cap L^q(B)$ for all $q\in[1,2^*_s)$ and up to passing to a further subsequence 
\begin{align}\label{ae conv}
f_p\to f_\infty \quad\mbox{a.e. in }\Om.
\end{align}
Actually, the convergence is locally uniform in $\Om$. Indeed, by classical regularity (Theorem \ref{thm CCPL}), we have 
$f_p\in C^\infty(\Om)$. Thus, $f_p$ is $s$-harmonic in $\Om$ in the classical sense. In particular, it is a local weak solution to the Fractional Laplace problem in $\Om$ (w.r.t. its own Dirichlet boundary data). So, from Theorems 1.4 and 3.2 in \cite{BLS}, for any open set $B\Subset\Om$, there exist $\sigma\in(0,1)$ and a constant $C=C(\sigma,n,s,B)$ such that 
$$
\|f_p\|_{C^{0,\sigma}(B)}\leq C\left(\|f_p\|_{L^2(B)}+\|f_p\|_{L^1(\R^n)}\right).
$$
Hence, by \eqref{Hs est} we obtain
\begin{align}\label{unif est}
\|f_p\|_{C^{0,\sigma}(B)}\leq C\|f_p\|_{L^1(\R^n)}.
\end{align}
Thus, the $L^1$-bound \eqref{fp bdd} of $f_p$ in $\R^n$ ensures
\begin{align}\label{unif conv}
f_p\to f_\infty\quad\mbox{locally uniformly in }\Om,
\end{align}
up to passing to a further subsequence.\\
Let us prove that $f_\infty$ is real analytic in $\Om$. By Theorem \ref{thm CCPL}, for any pair of open sets $B'\Subset B\Subset\Om$ and multindex $\alpha\in\N^n$, we have
$$
\|D^\alpha f_p\|_{L^\infty(B')}\leq c^{|\alpha|}\alpha!C(B,B',n,s)\left(\|f_p\|_{L^\infty(B)}+\|f_p\|_{L^1(\R^n)}\right).
$$
for a universal constant $c>0$.
Thus, using \eqref{unif est} and the $L^1$-bound of $f_p$, we can pass to the limit in the above estimate, obtaining
$$
\|D^\alpha f_\infty\|_{L^\infty(B')}\leq c^{|\alpha|}\alpha!C(B,B',n,s,\sigma),
$$
which provides the analyticity of $f_\infty$ in $\Om$.\\
Moreover, again from \eqref{fp bdd}, we infer the existence of a Radon measure $\mu\in\mathcal{M}(\R^n)$ with $|\mu|(\R^n)\leq{1}$ and such that, up to passing to a subsequence,
$$
f_p\stackrel{*}{\rightharpoonup}\mu\quad\mbox{in }\mathcal{M}({\R^n}).
$$
In particular, by \eqref{EL for Ep}, $f_p$ is $s$-harmonic in the sense of distributions, namely
\begin{align}\label{EL distr fp}
\int_{\R^n}f_p(-\Delta)^s \varphi \,dx=0\qquad\forall\varphi\in C^\infty_c(\Om).
\end{align}
Notice that if $\varphi\in C^\infty_c(\Om)$, then $(-\Delta)^s\varphi\in C^\infty(\R^n)$ and 
$$
|(-\Delta)^s\varphi(x)|\leq c_{n,s}\|\varphi\|_\infty|\Om| d(x,\partial\Om)^{-n-2s}\qquad\forall x\in \R^n\setminus\Om.
$$
So, $(-\Delta)^s\varphi\in C_0(\R^n)$ and we can pass to the limit in \eqref{EL distr fp}, obtaining
$$
\int_{\R^n}(-\Delta)^s \varphi \,d\mu=0\qquad\forall\varphi\in C^\infty_c(\Om),
$$ 
i.e. $\mu$ is $s$-harmonic in $\Om$. In addition, thanks to \eqref{ae conv}, we can write
\begin{align}\label{mu on Om}
\mu\,\mres\Om=f_\infty\mathscr{L}^n\mres\Om.
\end{align}
The next result is central in our argument.

\begin{Lemma}{\bf (Non-triviality of $f_\infty$)}\label{non triv lemma}
The map $f_\infty$ constructed above is such that $f_\infty\not\equiv0$ in $\Om$.
\end{Lemma}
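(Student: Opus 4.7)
The plan is to argue by contradiction: assume $f_\infty\equiv 0$ in $\Omega$, which by the identification \eqref{mu on Om} forces $\mu\mres\Omega = 0$, so that $\operatorname{supp}\mu\subset\R^n\setminus\Omega$.

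The starting point is to extract a key identity from the Euler--Lagrange equation. Since $v:=u_p-u_0$ lies in $\mathscr{L}^{2s,p}_{w,0}(\Om)$, testing \eqref{EL for Ep} with this $v$ and using the definition \eqref{fp} of $f_p$ gives
$$e_p = \int_{\R^n} f_p\,(-\Delta)^s u_p\,dx = \int_{\R^n} f_p\,(-\Delta)^s u_0\,dx.$$
The $C_c^{2s+\gamma}$-hypothesis ensures $(-\Delta)^s u_0\in C_0(\R^n)$, so the weak-$*$ convergence $f_p\rightharpoonup^*\mu$ in $\mathcal{M}(\R^n)$ yields
$$e_\infty = \int_{\R^n}(-\Delta)^s u_0\,d\mu,$$
and since $e_\infty>0$, in particular $\mu\ne0$.

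Next, extract a potential-theoretic consequence of the $s$-harmonicity of $\mu$ on $\Omega$. For any $\varphi\in C_c^\infty(\Omega)$ and any $x\in\R^n\setminus\Omega$, one has $(-\Delta)^s\varphi(x) = -c_{n,s}\int_\Omega \varphi(y)|x-y|^{-(n+2s)}\,dy$. Inserting this into $\int(-\Delta)^s\varphi\,d\mu=0$ (using $\operatorname{supp}\mu\subset\R^n\setminus\Omega$) and swapping the order of integration by Fubini yields
$$\int_\Om \varphi(y)\left[\int_{\R^n\setminus\Om}\frac{d\mu(x)}{|x-y|^{n+2s}}\right]dy = 0, \qquad \forall \varphi\in C_c^\infty(\Omega).$$
Thus the function $g(y):=\int_{\R^n\setminus\Om}|x-y|^{-(n+2s)}\,d\mu(x)$ vanishes on $\Omega$.

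To conclude, observe that because $\mu$ has compact support, $g$ is real-analytic on the open set $\R^n\setminus\operatorname{supp}\mu$ (the kernel is real-analytic in $y$ for $y\notin\operatorname{supp}\mu$, and differentiation under the integral is justified by the finite total variation of $\mu$). Analytic continuation then extends $g\equiv 0$ from the open subset $\Omega$ to the entire connected component of $\R^n\setminus\operatorname{supp}\mu$ containing $\Omega$. In the generic case that this component is the unbounded one $U_\infty$, expanding the kernel $|x-y|^{-(n+2s)}$ in powers of $|y|^{-1}$ as $|y|\to\infty$ gives $g(y) = \mu(\R^n)|y|^{-(n+2s)} + \sum_{k\geq 1} P_k(\mu)|y|^{-(n+2s+k)}$, where the $P_k(\mu)$ are polynomial moments of $\mu$. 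The vanishing of $g$ on $U_\infty$ forces every moment of $\mu$ to vanish, hence $\mu=0$ by compactness of $\operatorname{supp}\mu$ and Weierstrass approximation, contradicting $\mu\ne 0$.

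The main obstacle is the topological case in which $\operatorname{supp}\mu$ encloses $\Omega$, so that the component of $\R^n\setminus\operatorname{supp}\mu$ meeting $\Omega$ is bounded. In this scenario, the analytic continuation argument alone does not reach infinity, and a refined treatment is required: one would exploit the $C^{2s+\gamma}_c$-regularity of $u_0$ (which is precisely the assumption the authors flag as essential for non-triviality) together with the boundary behaviour of $\mu$ near $\partial\Omega$, likely through a regularization argument involving cutoffs $\eta_\epsilon u_0\in C_c^{2s+\gamma}(\Omega)$, to transfer the vanishing of $g$ across connected components.
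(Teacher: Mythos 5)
Your opening step is sound and is in fact the same computation the paper makes: testing \eqref{EL for Ep} with $u_p-u_0$ gives $e_p=\int_{\R^n}f_p(-\Delta)^su_0$, and passing to the limit yields $e_\infty=\int_{\R^n}(-\Delta)^su_0\,d\mu>0$, so $\mu\neq0$. But from there the two arguments diverge, and yours has a genuine gap that you yourself flag: under the contradiction hypothesis $f_\infty\equiv0$ you only know that the potential $g(y)=\int|x-y|^{-(n+2s)}\,d\mu(x)$ vanishes on $\Om$ and, by real-analyticity, on the connected component of $\R^n\setminus\operatorname{supp}\mu$ containing $\Om$. Your moment expansion is performed at infinity, so it only applies when that component is unbounded. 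Nothing at this stage of the argument prevents $\operatorname{supp}\mu$ from enclosing $\Om$ (heuristically the measure concentrates near $\partial\Om$ or where $|(-\Delta)^su_\infty|=e_\infty$, so this is arguably the main case, not a degenerate one), and the ``refined treatment'' you sketch for it (cutoffs $\eta_\eps u_0$ supported in $\Om$) does not constitute an argument. Two further points are asserted rather than proved: the compactness of $\operatorname{supp}\mu$ (the paper establishes it only later, by a separate uniform-decay argument for $(-\Delta)^su_p$, not available where this lemma sits), and the claim that vanishing of $g$ near infinity kills \emph{all} polynomial moments of $\mu$, which requires a Gegenbauer/spherical-harmonics decomposition of the expansion coefficients, not just ``uniqueness of the expansion''.

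For what it is worth, your potential-theoretic route can probably be salvaged: expand the kernel around a point $x_0\in\Om$ instead of at infinity. The expansion $|x-y|^{-(n+2s)}=\sum_k |y-x_0|^k\,|x-x_0|^{-(n+2s)-k}\,C_k^{(n+2s)/2}\bigl(\widehat{(x-x_0)}\cdot\widehat{(y-x_0)}\bigr)$ converges for $|y-x_0|<d(x_0,\operatorname{supp}\mu)$, a ball on which $g\equiv0$ by analytic continuation regardless of the topology of $\R^n\setminus\operatorname{supp}\mu$; the same spherical-harmonics argument then yields $\int\psi(|x-x_0|)Y(\widehat{x-x_0})\,d\mu=0$ for a dense class, hence $\mu=0$, and compact support of $\mu$ is no longer needed. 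This would give a genuinely different (and somewhat stronger, since it shows no nonzero finite measure supported in $\R^n\setminus\Om$ can be $s$-harmonic in $\Om$) proof, at the price of the Gegenbauer machinery. The paper avoids all of this: using Theorem \ref{DSV} and a Kelvin transform it builds $w_\eps\in C^{2s+\gamma}_c(\R^n)$ with $w_\eps=u_0$ in $\R^n\setminus\Om$ and $|(-\Delta)^sw_\eps|\leq\eps$ there (Corollary \ref{bc}), tests \eqref{EL for Ep} with $u_p-w_\eps$, and passes to the limit to get $e_\infty\leq\int_\Om f_\infty(-\Delta)^sw_\eps+\eps$, so $f_\infty\not\equiv0$ directly, with no case distinction, no compactness of $\operatorname{supp}\mu$, and no moment analysis. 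As submitted, though, your proof is incomplete in the enclosed-support case.
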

Before proving Lemma \ref{non triv lemma}, we need a couple of technical results. They can be seen essentially as a consequence of Theorem 1.1 in \cite{DSV}. The latter can be stated as ``all functions are locally $s$-harmonic up to a small error". We recall (and briefly prove) it here in a more general version, for H\"older continuous functions, which can be easily obtained from Theorem 1.1 in \cite{Kr}, which is another version of \cite[Theorem 1.1]{DSV}.
\begin{Theorem}{\bf (All functions are locally $s$-harmonic up to a small error)}\label{DSV}
Let $\alpha\in\R^+$ and suppose that $u\in C^\alpha(\overline B_1)$. Then, for every $\eps>0$, there exists $u_\eps\in C^\infty_c(\R^n)$ such that
\begin{equation}\nonumber\left\{
\begin{aligned}
&(-\Delta)^s u_\eps=0\quad\mbox{in }\overline B_1,\\
&\|u-u_\eps\|_{C^\alpha(B_1)}\leq\eps.
\end{aligned}
\right.
\end{equation}
\end{Theorem}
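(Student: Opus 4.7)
The plan is to deduce this Hölder-approximation statement from the original version of \cite[Theorem 1.1]{DSV}, which produces an $s$-harmonic approximation in any $C^k$ norm ($k\in\N$) for functions that are already smooth on $\overline B_1$. The idea is to first smoothen $u$ by mollification, apply the integer-order DSV to the mollification, and then combine estimates via the triangle inequality to recover approximation in $C^\alpha$. In detail:

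\textbf{Step 1 (Extension and mollification).} Using a standard Hölder extension theorem, extend $u$ to a function $\tilde u\in C^\alpha(\R^n)$ with compact support. For $\delta>0$, let $\rho_\delta$ be a standard radial mollifier supported in $B_\delta$ and set $u_\delta:=\tilde u*\rho_\delta\in C^\infty_c(\R^n)$. Standard convolution estimates yield $\|u-u_\delta\|_{L^\infty(B_1)}\le C\delta^{\alpha-\lfloor\alpha\rfloor}\|\tilde u\|_{C^\alpha(\R^n)}$ together with uniform bounds on all $C^k$-norms of $u_\delta$ in terms of negative powers of $\delta$.

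\textbf{Step 2 (Apply the smooth DSV).} Fix any integer $k\ge\lceil\alpha\rceil+1$. Applying \cite[Theorem 1.1]{DSV} to the smooth function $u_\delta$ provides, for any $\eta>0$, a function $v_{\delta,\eta}\in C^\infty_c(\R^n)$ satisfying
\begin{equation*}
(-\Delta)^s v_{\delta,\eta}=0\quad\text{in }\overline B_1,\qquad\|u_\delta-v_{\delta,\eta}\|_{C^k(B_1)}\le\eta.
\end{equation*}
Since $k\ge\lceil\alpha\rceil+1$, the Morrey-type embedding on the bounded domain $B_1$ gives $\|u_\delta-v_{\delta,\eta}\|_{C^\alpha(B_1)}\le C(k,\alpha)\,\eta$.

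\textbf{Step 3 (Triangulation).} Write
\begin{equation*}
\|u-v_{\delta,\eta}\|_{C^\alpha(B_1)}\le\|u-u_\delta\|_{C^\alpha(B_1)}+\|u_\delta-v_{\delta,\eta}\|_{C^\alpha(B_1)}.
\end{equation*}
The second term is made $\le\eps/2$ by choosing $\eta$ small in Step 2. For the first term, one uses an interpolation estimate between the sup-norm bound $\|u-u_\delta\|_{L^\infty(B_1)}\to0$ and the uniform Hölder-seminorm bound $[u-u_\delta]_{C^\alpha(B_1)}\le 2[\tilde u]_{C^\alpha(\R^n)}$ to control $\|u-u_\delta\|_{C^\alpha(B_1)}$ and ensure it can be forced $\le\eps/2$ by choosing $\delta$ sufficiently small. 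Setting $u_\eps:=v_{\delta,\eta}$ for such a choice of parameters then yields the claim.

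\textbf{Main obstacle.} The subtle point is the bound on $\|u-u_\delta\|_{C^\alpha(B_1)}$ in Step 3: mollification does not converge in $C^\alpha$-norm for arbitrary $u\in C^\alpha$, only in $C^{\alpha'}$ for $\alpha'<\alpha$. This is the technical step that the remark about \cite{Kr} is presumably meant to address, either via a refined interpolation procedure (exploiting the slight excess regularity of $\tilde u$ once one realises that the statement will be applied with a strict inequality on the regularity index) or by working within the ``little Hölder'' closure of $C^\infty_c(\R^n)$ inside $C^\alpha$. Once this technical point is settled, combining it with Steps 1--2 closes the argument.
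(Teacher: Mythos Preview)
The paper does not give a detailed proof of this statement: it merely recalls the result and says it ``can be easily obtained from \cite[Theorem 1.1]{DSV} by a mollification argument (see also \cite{Kr}).'' Your proposal is precisely that mollification argument, so in spirit it matches the paper's intended approach exactly.

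Your ``Main obstacle'' is not a gap in your write-up but a genuine issue with the statement itself. Since any $u_\eps\in C^\infty_c(\R^n)$ restricts to a smooth function on $\overline B_1$, the conclusion $\|u-u_\eps\|_{C^\alpha(B_1)}\le\eps$ would force $u$ to lie in the $C^\alpha$-closure of $C^\infty(\overline B_1)$, i.e.\ the little H\"older space; for non-integer $\alpha$ this is strictly smaller than $C^\alpha(\overline B_1)$. So the theorem as written cannot hold for arbitrary $u\in C^\alpha(\overline B_1)$, and no argument---mollification or otherwise---can repair this. Your two proposed resolutions are exactly the right ones: either (i) read the statement in the little H\"older class (where mollification does converge in the full norm), or (ii) notice that the paper only ever invokes the theorem in Lemma~\ref{all func} and Corollary~\ref{bc} for restrictions of $C^{2s+\gamma}_c(\R^n)$ functions, so one can simply apply the argument with any exponent $\alpha'\in(2s,\alpha)$, for which mollification does converge in $C^{\alpha'}$; the subsequent estimates in Lemma~\ref{all func} only require the exponent to exceed $2s$. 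Either reading salvages the application, and your Steps~1--3 then go through cleanly.
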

\begin{proof}
For integer $\alpha$, the result reduces to Theorem 1.1 in \cite{Kr}. So, we can assume $\alpha\in\R^+\setminus\N$.\\
Consider a mollification $u_\eta\in C^\infty(\R^n)$ of $u$, with $\eta>0$, such that
$$
\|u-u_\eta\|_{C^\alpha(B_1)}<\eta.
$$
Notice that $u_{{\eta}_{|\bar B_1}}\in C^k(\bar B_1)$, for all $k\in \N$. So, by applying \cite[Theorem 1.1]{Kr} to $u_{\eta_{|\bar B_1}}$, for any $\eps>0$ and $k\in\N$, we can find $u_\eta^\eps\in C^\infty_c(\R^n)$ such that
\begin{equation}\nonumber\left\{
\begin{aligned}
&(-\Delta)^s u_\eta^\eps=0\quad\mbox{in }\overline B_1,\\
&\|u_\eta-u_\eta^\eps\|_{C^k(B_1)}\leq\frac\eps 2.
\end{aligned}
\right.
\end{equation}
For $k>\alpha$, we can estimate
$$
\|u-u_\eta^\eps\|_{C^\alpha(B_1)}\leq \|u_\eta-u_\eta\|_{C^\alpha(B_1)}+\|u_\eta-u_\eta^\eps\|_{C^k(B_1)}\leq\eta+\frac \eps 2.
$$
We conclude by choosing $\eta<\frac \eps 2$.
\end{proof}
\noindent The following lemma provides instead that ``all functions are locally almost $s$-harmonic".
\begin{Lemma}{\bf (All functions are locally almost $s$-harmonic)}\label{all func}
Suppose that $u\in C^{\alpha}(\overline B_1)$ for some $\alpha>2s$. Then, for every $\eps>0$, there exists $u_\eps\in C^\alpha_c(\R^n)$ such that
\begin{equation}\nonumber\left\{
\begin{aligned}
|(-\Delta)^s u_\eps|&\leq\eps\quad&&\mbox{in }\overline B_1,\\
u_\eps&=u&&\mbox{in } B_1.
\end{aligned}
\right.
\end{equation}
\end{Lemma}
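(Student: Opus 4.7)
The plan is to construct $u_\varepsilon$ as the sum of an $s$-harmonic approximation of $u$ furnished by Theorem \ref{DSV} and a compactly supported corrector of small $C^\alpha$ norm. First, I fix a small parameter $\eta>0$, to be determined at the end, and apply Theorem \ref{DSV} to $u \in C^\alpha(\overline{B_1})$, producing $v_\eta \in C^\infty_c(\R^n)$ with $(-\Delta)^s v_\eta = 0$ in $\overline{B_1}$ and $\|u - v_\eta\|_{C^\alpha(B_1)} \leq \eta$. Then I extend the residual $w_\eta := (u - v_\eta)|_{B_1}$ to a function $\tilde w_\eta \in C^\alpha_c(\R^n)$ which coincides with $w_\eta$ on $B_1$, is supported in $B_2$, and satisfies $\|\tilde w_\eta\|_{C^\alpha(\R^n)} \leq C_1 \eta$. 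Concretely, this can be done by composing a bounded Hölder extension operator $C^\alpha(\overline{B_1}) \to C^\alpha(\R^n)$ with multiplication by a fixed cutoff $\chi \in C^\infty_c(B_2)$ equal to one on $B_1$, the constant $C_1$ depending only on $n$, $\alpha$, and the chosen $\chi$.

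Setting $u_\varepsilon := v_\eta + \tilde w_\eta \in C^\alpha_c(\R^n)$, one has by construction $u_\varepsilon = u$ in $B_1$, and by linearity and $s$-harmonicity of $v_\eta$ on $\overline{B_1}$ it follows that
$$
(-\Delta)^s u_\varepsilon = (-\Delta)^s \tilde w_\eta \qquad \text{on } \overline{B_1}.
$$
The proof then reduces to the pointwise estimate $\|(-\Delta)^s \tilde w_\eta\|_{L^\infty(\overline{B_1})} \leq C_2 \|\tilde w_\eta\|_{C^\alpha(\R^n)}$, which, combined with the bound on $\tilde w_\eta$ above, gives $\|(-\Delta)^s u_\varepsilon\|_{L^\infty(\overline{B_1})} \leq C_1 C_2 \eta$. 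One then finishes by choosing $\eta$ so small that $C_1 C_2 \eta \leq \varepsilon$.

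To verify the key estimate I would use the symmetric second-difference representation
$$
(-\Delta)^s \tilde w_\eta(x) = -\frac{c_{n,s}}{2} \int_{\R^n} \frac{\tilde w_\eta(x+y) + \tilde w_\eta(x-y) - 2\tilde w_\eta(x)}{|y|^{n+2s}} \, dy,
$$
bounding the second-order difference by $C \|\tilde w_\eta\|_{C^\alpha(\R^n)} |y|^{\min(\alpha,2)}$ for $|y|\leq 1$ (via the Hölder regularity of $\tilde w_\eta$ itself if $\alpha<1$, or of its gradient if $\alpha\geq 1$), and by $4\|\tilde w_\eta\|_{L^\infty(\R^n)}$ for $|y|\geq 1$, where the compact support of $\tilde w_\eta$ makes the tail harmless. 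The integrability of the local part against $|y|^{-n-2s}$ near the origin is exactly where the hypothesis $\alpha > 2s$ enters.

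The main obstacle is precisely this pointwise estimate for the $s$-Laplacian of the corrector, since it is the only step that uses the assumption $\alpha>2s$ in a quantitative way; the remaining ingredients—Theorem \ref{DSV} and the extension-plus-cutoff construction—are standard and linear, and therefore preserve the smallness of the $C^\alpha$ norm with explicit bounds on the constants.
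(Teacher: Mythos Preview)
Your proof is correct and follows the same overall strategy as the paper: invoke Theorem~\ref{DSV} to obtain an $s$-harmonic approximant $v$, set $u_\varepsilon=v+(\text{a compactly supported }C^\alpha\text{ corrector equal to }u-v\text{ on }B_1\text{ with small norm})$, and then bound $(-\Delta)^s$ of the corrector by a near/far splitting of the singular integral, with local integrability coming precisely from $\alpha>2s$. The packaging differs slightly: the paper first extends $u$, applies Theorem~\ref{DSV} on the larger ball $B_3$, writes the corrector as $\varphi(u-v_\varepsilon)$ for a fixed cutoff $\varphi$ supported in $B_3$, and then computes $(-\Delta)^s u_\varepsilon$ via the fractional Leibniz formula; after a cancellation this leaves the bilinear term $B(\varphi,u-v_\varepsilon)$, which is estimated by exactly the same near/far integral bounds you apply directly to the second-difference representation of $(-\Delta)^s\tilde w_\eta$. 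Your route---extension operator plus cutoff, followed by the standard pointwise bound $\|(-\Delta)^s f\|_{L^\infty}\le C\|f\|_{C^\alpha}$---is marginally more economical since it bypasses the Leibniz formula and its remainder, but the analytic content of the two arguments is identical.
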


\begin{proof}
By suitably extending $u$ to the whole $\R^n$, we can assume that $u\in C^\alpha_c(\R^n)$.
Consider a cut-off function $\varphi\in C^\infty_c(\R^n)$ such that $\varphi=1$ in $B_2$, $\varphi=0$ in $\R^n\setminus B_{3}$, and $0<\varphi<1$ elsewhere. Let us fix $\eps>0$ and apply Theorem \ref{DSV} to $u_{|\bar B_3}\in C^\alpha(\overline B_3)$. Then, we can find a function $v_\eps\in C^\infty_c(\R^n)$  which is $s$-harmonic in $B_3$ and
\begin{align}\label{eps small}
\|u-v_\eps\|_{C^\alpha(B_3)}\leq\eps.
\end{align}
Now set $$u_\eps:=\varphi(u-v_\eps)+v_\eps\in C^\alpha_c(\R^n).$$
Note that $u_\eps=u$ in $B_2$. Moreover, we can compute its $s$-Laplacian using the Leibniz formula (see for instance \cite{BPSV}):
$$
(-\Delta)^s u_\eps=(u-v_\eps)(-\Delta)^s \varphi +\varphi\,(-\Delta)^s (u-v_\eps)- B(\varphi, u-v_\eps)+(-\Delta)^sv_\eps\qquad\mbox{in }\R^n.
$$
From the $s$-harmonicity of $v_\eps$ in $B_3$, we have 
\begin{align}\label{Leib}
(-\Delta)^s u_\eps=(u-v_\eps)(-\Delta)^s \varphi +\varphi(-\Delta)^s u- B(\varphi, u-v_\eps)\qquad\mbox{in }B_3.
\end{align}
Now, for $x\in \overline B_1$, we have 
\begin{align*}
B(\varphi, u-v_\eps)(x)=\, &c_{n,s}\int_{\R^n}\frac{(\varphi(x)-\varphi(y))((u-v_\eps)(x)-(u-v_\eps)(y))}{|x-y|^{n+2s}}\,dy\\
=\, &\varphi(x)\,c_{n,s}\left(\int_{\R^n}\frac{u(x)-u(y)}{|x-y|^{n+2s}}\,dy+\int_{\R^n}\frac{v_\eps(x)-v_\eps(y)}{|x-y|^{n+2s}}\,dy\right)-\\
&-c_{n,s}\int_{\R^n}\frac{\varphi(y)((u-v_\eps)(x)-(u-v_\eps)(y))}{|x-y|^{n+2s}}\,dy\\
=\,& \varphi(x)(-\Delta)^s u(x)-c_{n,s}\int_{\R^n}\frac{\varphi(y)((u-v_\eps)(x)-(u-v_\eps)(y))}{|x-y|^{n+2s}}\,dy,
\end{align*}
where we used that $v_\eps$ is $s$-harmonic in $\overline B_1$ and each integral is understood in the principal value sense.\\
Thus, by using \eqref{Leib} and setting $w_\eps:=u-v_\eps$, we have
$$
(-\Delta)^s u_\eps(x)=w_\eps(x)(-\Delta)^s \varphi(x)+c_{n,s}\int_{\R^n}\frac{\varphi(y)(w_\eps(x)-w_\eps(y))}{|x-y|^{n+2s}}\,dy \qquad\forall x\in \overline B_1.
$$
Since $\varphi=1$ in $B_2$ and vanishes out of $B_3$, that the last integral can be written as
\begin{align*}
\int_{\R^n}\frac{\varphi(y)(w_\eps(x)-w_\eps(y))}{|x-y|^{n+2s}}\,dy&=\int_{B_3}\frac{\varphi(y)(w_\eps(x)-w_\eps(y))}{|x-y|^{n+2s}}\,dy\\
&=\int_{B_{\frac12}(x)}\frac{w_\eps(x)-w_\eps(y)}{|x-y|^{n+2s}}\,dy+\int_{B_3\setminus B_{\frac12}(x)}\frac{\varphi(y)(w_\eps(x)-w_\eps(y))}{|x-y|^{n+2s}}\,dy.
\end{align*}
In order to estimate the first integral in the right hand side, we distinguish between the following cases. It is not restrictive to assume that $\alpha\in\R^+\setminus\N$ and $\alpha<2.$
\begin{itemize}
\item
{{$2s<\alpha<1$}}.\\
By using the H\"older continuity of $w_\eps$ and \eqref{eps small}, we have
$$
\left|\int_{B_{\frac12}(x)}\frac{w_\eps(x)-w_\eps(y)}{|x-y|^{n+2s}}\,dy\right|\leq[w_\eps]_{C^\alpha(B_3)}\int_{B_{\frac12}(x)}\frac{dy}{|x-y|^{n+2s-\alpha}}\leq \eps\, C,
$$
where $C=C(n,s,\alpha):=\int_{B_{\frac12}(x)}\frac{dy}{|x-y|^{n+2s-\alpha}}$ is a positive constant.\\
\item
{{ $2s\leq1<\alpha$}}.\\
This case is similar to the previous one.\\

\item
{{$1<2s<\alpha$}}.\\
By using the symmetry of $B_{\frac12}(x)$, the Lagrange mean value Theorem, and the fact that $\grad w_\eps\in C^{\alpha-1}(B_3,\R^n)$, we have
\begin{align*}
\left|\int_{B_{\frac12}(x)}\frac{w_\eps(x)-w_\eps(y)}{|x-y|^{n+2s}}\,dy\right|&=\left|\int_{B_{\frac12}(x)}\frac{w_\eps(x)-w_\eps(y)-\grad w_\eps(\xi)\cdot(x-y)}{|x-y|^{n+2s}}\,dy\right|\\
&\leq[\grad w_\eps]_{C^{\alpha-1}(B_3)}\int_{B_\frac12(x)}\frac{dy}{|x-y|^{n+2s-\alpha}}\\
&\leq \eps \, C.
\end{align*}
\end{itemize}
On the other hand, 
\begin{align*}
\left|\int_{B_3\setminus B_{\frac12}(x)}\frac{\varphi(y)(w_\eps(x)-w_\eps(y))}{|x-y|^{n+2s}}\,dy\right|&\leq\int_{B_3\setminus B_{\frac12}(x)}\frac{|w_\eps(x)-w_\eps(y)|}{|x-y|^{n+2s}}\,dy\\
&\leq 2\|w_\eps\|_{L^\infty(B_3)}2^{n+2s}|B_3|\\
&\leq C'\eps,
\end{align*}
for a constant $C'=C'(n,s)>0$.\\
Therefore, we can estimate the $s$-Laplacian of $u_\eps$ as
$$
|(-\Delta)^s u_\eps|\leq C''\eps\qquad\mbox{in }\overline B_1,
$$
for a constant $C''=C''\left(\|(-\Delta)^s \varphi\|_{L^\infty(B_1)}, C, C'\right)>0$ independent of $\eps$, providing the statement.
\end{proof}
As a consequence of Lemma \ref{all func}, we are able to construct a competitor $w_\eps\in \mathcal{W}_{u_0}^{2s,\infty}(\Om)$ with small Fractional Laplacian in $\R^n\setminus\Om$.
\begin{Corollary}{\bf (Improving boundary conditions)}\label{bc}
Suppose that $u_0\in C^{2s+\gamma}_c(\R^n)$, for some $\gamma>0$. Then, for every $\eps>0$, there exists $w_\eps\in C^{2s+\gamma}_c(\R^n)$ such that
\begin{equation}\nonumber\left\{
\begin{aligned}
|(-\Delta)^s w_\eps|&\leq\eps\quad&&\mbox{in }\R^n\setminus\Om,\\
w_\eps&=u_0&&\mbox{in } \R^n\setminus\Om.
\end{aligned}
\right.
\end{equation}
\end{Corollary}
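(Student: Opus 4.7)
The plan is to run a construction that mirrors the proof of Lemma~\ref{all func}, but adapted so that the role played there by $\overline B_1$ is taken by (a neighborhood of) $\R^n\setminus\Omega$. Concretely, fix $R$ large so that $\overline{\Omega}\cup\operatorname{supp}u_0\subset B_{R/2}$. Apply (a scaled version of) Theorem~\ref{DSV} to $u_0|_{\overline B_R}\in C^{2s+\gamma}(\overline B_R)$, with error parameter $\delta>0$ to be fixed, to obtain $v_\delta\in C^\infty_c(\R^n)$ with $(-\Delta)^s v_\delta\equiv 0$ on $\overline B_R$ and $\|u_0-v_\delta\|_{C^{2s+\gamma}(B_R)}\leq\delta$. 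Choose a cutoff $\varphi\in C^\infty(\R^n)$ that equals $1$ on a neighborhood of $\R^n\setminus\Omega$ and vanishes on some compact $K\Subset\Omega$, and set
\[
w_\eps:=
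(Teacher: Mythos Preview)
Your plan has a genuine gap. Following the template of Lemma~\ref{all func}, the natural completion of your formula is $w_\eps=\varphi(u_0-v_\delta)+v_\delta$. Since $\varphi\equiv 1$ on a neighbourhood of $\R^n\setminus\Om$, the difference $w_\eps-u_0=(\varphi-1)(u_0-v_\delta)$ is supported in a compact set $K'\Subset\Om$ and satisfies $\|w_\eps-u_0\|_{C^{2s+\gamma}}\leq C\delta$; consequently $\|(-\Delta)^s(w_\eps-u_0)\|_{L^\infty(\R^n)}\leq C\delta$. This forces
\[
(-\Delta)^s w_\eps(x)=(-\Delta)^s u_0(x)+O(\delta)\qquad\text{for every }x\in\R^n\setminus\Om,
\]
so $|(-\Delta)^s w_\eps|$ cannot be made smaller than $|(-\Delta)^s u_0|$ on $\R^n\setminus\Om$, which is precisely the quantity you are trying to beat. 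Put differently: in Lemma~\ref{all func} the cut-off $\varphi$ vanishes outside a compact set, so $\varphi(u-v_\eps)$ is \emph{globally} $C^\alpha$-small and one really computes $(-\Delta)^s$ of a small function. In your set-up $\varphi\equiv 1$ on the unbounded exterior, so $\varphi(u_0-v_\delta)=u_0-v_\delta$ there; outside $B_R$ this equals $-v_\delta$, which Theorem~\ref{DSV} does not control at all. Equivalently, any $w_\eps$ agreeing with $u_0$ on $\R^n\setminus\Om$ differs from $u_0$ only inside $\Om$, and a compactly supported perturbation with small $C^{2s+\gamma}$ norm can only move $(-\Delta)^s u_0$ by $O(\delta)$ pointwise in the exterior.

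The paper avoids this obstruction via the Kelvin transform $\mathcal K$: it maps the unbounded set $\R^n\setminus\overline{B_r(x_0)}$ (with $B_r(x_0)\Subset\Om$) onto the compact ball $\overline{B_r(x_0)}$, applies Lemma~\ref{all func} to $(u_0)_{\mathcal K}$ on that ball, and then transforms back. The compact support of $u_0$ guarantees that $(u_0)_{\mathcal K}$ is regular at the centre, and the identity $(-\Delta)^s(v_\eps)_{\mathcal K}(x)=|x-x_0|^{-n-2s}(-\Delta)^s v_\eps(\mathcal K(x))$ both transfers the smallness and supplies the decay at infinity that your direct approach cannot produce. If you want to salvage a direct argument, you would need a version of Theorem~\ref{DSV}/Lemma~\ref{all func} on exterior domains, which is essentially what the Kelvin trick provides.
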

\begin{proof}
{\textit{Step 1: case $\Om=B_1$}}.\\
 Consider the Kelvin Transform $\mathcal K:\R^n\setminus\{0\}\to\R^n\setminus\{0\}$ defined as
$$
\mathcal K(x):=\frac{x}{|x|^2}.
$$
Observe that $\mathcal K(B_1)=\R^n\setminus\overline B_1$ and $\mathcal K_{|\partial B_1}=\mathrm{id}_{|\partial B_1}$.
Now, consider the function $v:=(u_0)_\mathcal K: \R^n\to \R$ defined as 
$$v(x):=(u_0)_\mathcal{K}(x):=|x|^{2s-n}u_0(\mathcal K(x)), \qquad\forall x\in \R^n.$$
Since $u_0$ has compact support, the origin is not a singularity for $v$, which is then well defined and of class $C^{2s+\gamma}$ in $\R^n$. \\
Let us fix $\eps>0$. We can apply Lemma \ref{all func} to the restriction $v_{|B_1}\in C^{2s+\gamma}(\overline B_1)$, obtaining a function $v_\eps\in C^{2s+\gamma}_c(\R^n)$ such that
\begin{equation}\label{v eps}\left\{
\begin{aligned}
|(-\Delta)^s v_\eps|&\leq\eps\quad&&\mbox{in }\overline B_1,\\
v_\eps&=v&&\mbox{in } B_1.
\end{aligned}
\right.
\end{equation}
Now define $w_\eps:=(v_\eps)_\mathcal K$. Again, since $v_\eps$ has compact support, $w_\eps$ is well defined and $C^{2s+\gamma}$ in $\R^n$. Moreover, it is immediate to see that
$$
{w_\eps}=u_0\qquad\mbox{in }\R^n\setminus B_1.
$$
Finally, using in \cite[Prop. 1.22(ii)]{ADV} (which holds also for $C^{2s+\gamma}$-functions) and \eqref{v eps},  we can compute and estimate the Fractional Laplacian of $w_\eps$ for $x\in \R^n\setminus B_1$ as
$$
|(-\Delta)^sw_\eps(x)|=|x|^{-2s-n}|(-\Delta)^s v_\eps(\mathcal K(x))|\leq\eps.
$$
{\textit{Step 2: general case}}.\\ It is enough to prove the thesis in $\R^n\setminus B_r(x)$, for a fixed ball $B_r(x)\Subset\Om$. To this purpose, we can consider the generalised Kelvin Transform $\mathcal K_{r,x}:\R^n\setminus\{x\}\to\R^n\setminus\{x\}$, defined as
$$
\mathcal{K}_{r,x}(y)=r^2\frac{y-x}{|y-x|^2}+x
$$
and the transformation 
$$
u_{\mathcal K_{r,x}}(y)=|y-x|^{2s-n}u({\mathcal K_{r,x}}(y)),
$$
for a function $u\in C^{2s+\gamma}_c(\R^n)$. By repeating the argument of the previous case, the conclusion follows.
\end{proof}

\begin{Remark}
We recall that our non-trivial choice of $u_0$, i.e. $u_0\not\equiv0$ in $\R^n\setminus\Om$, makes the result of Corollary \ref{bc} not obvious. Moreover, one cannot expect to pass to the limit as $\eps\to0$ in the thesis. In other words, the result does not hold for $\eps=0$, since otherwise one would find an extension $w$ of ${u_0}_{|\R^n\setminus\Om}$ to the whole $\R^n$ which is $s$-harmonic in $\R^n\setminus\Om$. Since $s$-harmonic functions satisfy the unique continuation property on their $s$-harmonicity domain, $w$ is forced to vanish on $\R^n\setminus\Om$, contradicting the non-trivial choice of $u_0$.
\end{Remark}

Now we are ready to ensure the non-triviality of $f_\infty$ in $\Om$.

{\it Proof of Lemma \ref{non triv lemma}}
Let us fix $\eps>0$ and let $w_\eps\in C^{2s+\gamma}_c(\R^n)$ be the function given by Corollary \ref{bc}, corresponding to our data $u_0\in C^{2s+\gamma}_c(\R^n)$. By testing \eqref{EL for Ep} with $u_p-w_\eps\in\mathscr{L}^{2s,p}_{w,0}(\Om)$, we get
$$
\int_{\R^n} f_p(-\Delta)^su_p=\int_{\R^n}f_p(-\Delta)^sw_\eps.
$$
On the other hand, from the definition of $f_p$, we can write
\begin{align*}
\int_{\R^n} f_p(-\Delta)^su_p=e_p^{1-p}\int_{\R^n}w|(-\Delta)^su_p|^{p}= e_p.
\end{align*}
Thus,
$$
\int_{\R^n}f_p(-\Delta)^sw_\eps= e_p.
$$
Passing to the limit as $p\to\infty$, by recalling \eqref{limit ep}, \eqref{mu on Om}, and that $|\mu|(\R^n)\leq{1}$, we obtain
\begin{align*}
e_\infty&=\int_{\R^n}(-\Delta)^sw_\eps\,d\mu\\
&=\int_\Om f_\infty(-\Delta)^sw_\eps+\int_{\R^n\setminus\Om}(-\Delta)^sw_\eps\,d\mu\\
&\leq\int_\Om f_\infty(-\Delta)^sw_\eps+\|(-\Delta)^sw_\eps\|_{L^\infty(\R^n\setminus\Om)}|\mu|(\R^n)\\
&\leq\int_\Om f_\infty(-\Delta)^sw_\eps+\eps.
\end{align*}
Therefore, if $\eps<e_\infty$, we have
$$
\int_\Om f_\infty(-\Delta)^sw_\eps\geq e_\infty-\eps>0,
$$
providing that $f_\infty\not\equiv0$ in $\Om$. \qed

We are ready to derive the PDE in \eqref{PDE u infty}. Thanks to Lemma \ref{non triv lemma}, $f_\infty$ is a non-trivial analytic function in $\Om$, so the set $\Gamma:=f_\infty^{-1}(0)\subset\Om$ has null Lebesgue measure. Moreover, recalling that, for all $q\in (1,\infty)$, $u_p\rightharpoonup u_\infty$ weakly in $\mathscr{L}^{2s,q}_{w,u_0}(\Om)$, up to passing to a further subsequence, we have 
\begin{align}\label{weak conv Lapl}
(-\Delta)^su_p\rightharpoonup (-\Delta)^su_\infty\quad\mbox{ weakly in }L^q_{\mathrm{loc}}(\R^n).
\end{align}
In particular, the convergence in \eqref{weak conv Lapl} holds (weakly) in $L^q(\Om)$. Recalling \eqref{fp}, we can write
$$
|f_p|^\frac{1}{p-1}\mathrm{sgn}(f_p)=e_p^{-1}w^\frac{1}{p-1}(-\Delta)^su_p\qquad\mbox{in }\Om.
$$
If we restrict the above equation to a compact set $K\subset\Om\setminus\Gamma$ and pass to the limit as $p\to\infty$, thanks to \eqref{limit ep}, \eqref{unif conv}, and Lemma \ref{non triv lemma}, we infer
$$
\mathrm{sgn}(f_\infty)=e_\infty^{-1}(-\Delta)^su_\infty\qquad\mbox{in }K,
$$
which provides the validity of \eqref{PDE u infty}, since $|\Gamma|=0$.\\

Now we aim to derive \eqref{PDE u infty mu}. Having proved \eqref{PDE u infty}, it is enough to restrict our attention to $\R^n\setminus\overline\Om$.\\
First, let us show that $(-\Delta)^s u_\infty\in C^{\gamma}_{\mathrm{loc}}(\R^n\setminus\overline\Om)$\footnote{With the notation $C^{\gamma}_\mathrm{loc}$ we understand the space of locally H\"older continuous functions (see Section \ref{sec:not}).} and $(-\Delta)^s u_\infty(x)\to0$ as $|x|\to\infty$: Since $u_\infty=u_0$ in $\R^n\setminus\Om$, we can write 
$$
(-\Delta)^s u_\infty(x)=(-\Delta)^s u_0(x)+c_{n,s}\int_\Om\frac{u_0(y)-u_\infty(y)}{|x-y|^{n+2s}}dy\qquad\forall x\in\R^n\setminus\overline\Om.
$$ 
Set 
$$A_\infty(x):=\int_\Om\frac{u_0(y)-u_\infty(y)}{|x-y|^{n+2s}}dy\qquad\forall x\in\R^n\setminus\overline\Om.$$
Then, for any $\beta\in\N_0^n$, we can estimate the derivatives of order $\beta$ of $A_\infty$ as
$$
|D^\beta A_\infty(x)|\leq d(x,\partial\Om)^{-n-2s-|\beta|}(\|u_0\|_{L^1(\Om)}+\|u_\infty\|_{L^1(\Om)})\qquad\forall x\in\R^n\setminus\overline\Om.
$$
In particular, $A_\infty(x)\to0$ as $|x|\to\infty$. Hence, since $u_0\in C^{2s+\gamma}_c(\R^n)$, we have that  $(-\Delta)^s u_0(x)\to0$ as $|x|\to\infty$, thus the same holds for $(-\Delta)^s u_\infty$. Moreover, if $x\in\R^n\setminus\overline\Om$ and $r<\frac12d(x,\partial\Om)$, the triangle inequality implies
$$
|D^\beta A_\infty|\leq r^{-n-2s-|\beta|}(\|u_0\|_{L^1(\Om)}+\|u_\infty\|_{L^1(\Om)})\qquad\mbox{in }B_r(x).
$$ 
Thus, $A_\infty\in C^{\infty}(\R^n\setminus\overline\Om)$ and so, being $(-\Delta)^s u_0\in C^{\gamma}(\R^n)$ (see \cite[Prop. 2.6]{Si}), we have that $(-\Delta)^s u_\infty\in C^{\gamma}_{\mathrm{loc}}(\R^n\setminus\overline\Om)$.\\
As a consequence, we can show that $\mu$ has compact support in $\R^n$: Applying the previous argument to $(-\Delta)^su_p$, we infer $(-\Delta)^su_p(x)\to0$ as $|x|\to\infty$. More precisely, since $u_p\rightharpoonup u_\infty$ weakly in $\mathscr{L}^{2s,q}_{w,u_0}(\Om)$ for all $q>1$, the norm $\|u_p\|_{L^1(\Om)}$ is uniformly bounded w.r.t. $p$, thus the following uniform estimate holds: for every $\eps>0$ there exists $R>0$, independent of $p$, such that
$$
\|(-\Delta)^su_p\|_{L^\infty(\R^n\setminus B_R)}<\eps.
$$
Since $e_p$ is bounded from below by $e_1>0$, by taking $\eps<e_1$, we have that there exists $\delta\in(0,1)$, independent of $p$, so that
$$
\frac{|(-\Delta)^su_p|}{e_p}<1-\delta\qquad\mbox{in }\R^n\setminus B_R.
$$
Therefore,
$$
\left(\frac{|(-\Delta)^su_p|}{e_p}\right)^{p-1}\to0\quad\mbox{ uniformly in }\R^n\setminus B_R, \quad\mbox{as }p\to\infty.
$$
Recalling the definition of $f_p$ in \eqref{fp}, we get
\begin{align*}
|f_p|&= e_p^{1-p}w|(-\Delta)^su_p|^{p-1}\leq{\|w\|_{L^\infty(\R^n)}}\left(\frac{|(-\Delta)^su_p|}{e_p}\right)^{p-1}.
\end{align*}
Thus, 
$$f_p\to0\quad\mbox{ uniformly in }\R^n\setminus B_R, \quad\mbox{as }p\to\infty.$$
In particular, by lower semicontinuity of the mass, $\mu$ cannot concentrate out of $B_R$, providing that supp$(\mu)\Subset\R^n.$\\
Now we are ready to derive the PDE \eqref{PDE u infty mu}. As we have seen for $(-\Delta)^s u_\infty$, we can prove that $(-\Delta)^s u_p\in C^{\gamma}_{\mathrm{loc}}(\R^n\setminus\overline\Om)$ and for every ball $B\Subset\R^n\setminus\overline\Om$, there exists $C=C(B,n,s)$ such that
$$
\|(-\Delta)^s u_p\|_{C^{\gamma}(B)}\leq \|(-\Delta)^s u_0\|_{C^\gamma(B)}+C(\|u_0\|_{L^1(\Om)}+\|u_p\|_{L^1(\Om)}).
$$
Thus, the uniform bound of $\|u_p\|_{L^1(\Om)}$ and \eqref{weak conv Lapl} ensure 
\begin{align}\label{unif Lapl}
(-\Delta)^s u_p\to(-\Delta)^s u_\infty\quad\mbox{locally uniformly in }\R^n\setminus\overline\Om\quad\mbox{as }p\to\infty.
\end{align}
By writing
$$
|f_p|^\frac{1}{p-1}=e_p^{-1}w^\frac{1}{p-1}{|(-\Delta)^s u_p|},
$$
and, by recalling that $e_p\to e_\infty$, we have
$$
|f_p|^\frac{1}{p-1}\to e_\infty^{-1}{|(-\Delta)^s u_\infty|}\quad\mbox{locally uniformly in }\R^n\setminus\overline\Om\quad\mbox{as }p\to\infty.
$$
In particular, we observe that if $x\in\R^n\setminus\overline\Om$ is such that ${(-\Delta)^s u_\infty(x)}\neq0$, then, in a neighborhood of $x$, by continuity, $(-\Delta)^su_p$ and $(-\Delta)^su_\infty$ have (equal) constant sign. Therefore, since $\mathrm{sgn}(f_p)=\mathrm{sgn}(-\Delta)^su_p$, the uniform convergence in \eqref{unif Lapl} implies
\begin{align}\label{sgn conv}
\mathrm{sgn}(f_p)\to\mathrm{sgn}(-\Delta)^s u_\infty\quad\mbox{locally uniformly in }(\R^n\setminus\overline\Om)\cap\{(-\Delta)^s u_\infty\neq0\}\quad\mbox{as }p\to\infty.
\end{align}
Now, from the $L^1$ bound of $f_p$ in $\R^n$, we can also assume that there exists a measure $\nu\in\mathcal{M}^+(\R^n)$, with $|\nu|(\R^n)\leq{1}$ and $|\mu|<<\nu$, such that
$$
|f_p|\stackrel{*}{\rightharpoonup}\nu\quad\mbox{in }\mathcal{M}(\R^n).
$$
So, by using \eqref{sgn conv}, we get
$$
f_p=|f_p|\,\mathrm{sgn}f_p\stackrel{*}{\rightharpoonup}\nu \,\mathrm{sgn}(-\Delta)^s u_\infty\quad\mbox{in  }(\R^n\setminus\overline\Om)\cap\{(-\Delta)^s u_\infty\neq0\}.
$$
On the other hand,
$$
f_p\stackrel{*}{\rightharpoonup}\mu=|\mu|\frac{d\mu}{d|\mu|}\quad\mbox{in }\R^n.
$$
Therefore, by uniqueness of the polar decomposition of measures, we have
\begin{align}\label{polar dec}
\nu=|\mu|\quad\mbox{and }\quad \frac{d\mu}{d|\mu|}=\mathrm{sgn}(-\Delta)^s u_\infty\quad |\mu|-\mbox{a.e. in  }(\R^n\setminus\overline\Om)\cap\{(-\Delta)^s u_\infty\neq0\}.
\end{align}
Now, we claim that supp$(|\mu|\mres\R^n\setminus\overline\Om)\subset\{x\in\R^n\setminus\overline\Om:|(-\Delta)^s u_\infty(x)|=e_\infty\}$. Indeed, suppose by contradiction that $|(-\Delta)^s u_\infty(x)|<e_\infty$ for some $x\in$ supp$(|\mu|\mres\R^n\setminus\overline\Om)$. Then, by \eqref{unif Lapl}, there exists $\eps\in(0,1)$, independent of $p$, and a radius $r>0$, such that
$$
\frac{|(-\Delta)^s u_p|}{e_p}\leq 1-\eps\quad\mbox{in } B_r(x).
$$
But then
$$
\left(\frac{|(-\Delta)^s u_p|}{e_p}\right)^{p-1}\to0\quad\mbox{uniformly in } B_r(x)\quad\mbox{as }p\to\infty,
$$
implying that
$$
|f_p|\to0\quad\mbox{uniformly in } B_r(x)\quad\mbox{as }p\to\infty.
$$
So, we have $|\mu|(B_r(x))=0$, leading to a contradiction. Therefore, taking into account \eqref{polar dec}, for every $x\in \mathrm{supp}(|\mu|\mres\R^n\setminus\overline\Om)$, we conclude that
$$
(-\Delta)^s u_\infty(x)=|(-\Delta)^s u_\infty(x)|\,\mathrm{sgn}(-\Delta)^s u_\infty(x)=e_\infty \frac{d\mu}{d|\mu|}.
$$

\subsection{Uniqueness}\label{subs: uniqueness}
We are left with proving the uniqueness of the minimiser $u_\infty$ in $\mathcal{W}^{2s,\infty}_{u_0}(\Om)$. The key step is to show that every minimiser of $E_\infty$ satisfies a fractional PDE in $\Om$ of the same structure as \eqref{PDE u infty}. The main idea is to introduce a penalisation term into the Gamma Convergence argument of Section \ref{subs:existence}, forcing the minimiser of the $L^p$-problem to convergence to a preselected minimiser of $E_\infty$. More explicitly, we prove the following result.
\begin{Lemma}{\bf (Necessity of the PDE)}\label{necessity of PDE}
Let $u_0$ and $\Om$ be as in Theorem \ref{main thm}. Suppose that $u\in\mathcal{W}^{2s,\infty}_{u_0}(\Om)$ is a minimiser of the functional $E_\infty$ defined in \eqref{E infty}. Then there exists an analytic function $f\in L^1(\Om)\setminus\{0\}$ such that
\begin{align}\label{PDE u}
(-\Delta)^su=e_\infty\mathrm{sgn}f\qquad\mbox{a.e.  in }\,\Om .
\end{align}

\end{Lemma}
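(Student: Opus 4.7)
The strategy, following the scheme announced just before the statement and inspired by \cite{CKM,M}, is to repeat the $L^p$-approximation of Section \ref{subs:existence}, but after inserting a penalisation term into $E_p$ that selects, in the $p\to\infty$ limit, the preassigned minimiser $u$. Concretely, fix $u$ and consider
$$
\tilde E_p(v):=E_p(v)+\alpha_p\,\Phi(v-u),
$$
where $\Phi\ge 0$ is a strictly convex, weakly lower semicontinuous functional on $\mathscr{L}^{2s,p}_{w,0}(\Om)$ (e.g.\ $\Phi(\cdot)=\|\cdot\|_{H^s(\R^n)}^2$) vanishing only at $0$, and $\alpha_p>0$ is chosen to tend to $0$ slowly enough that the $\Gamma$-limit is still $E_\infty$ on $\mathcal W^{2s,\infty}_{u_0}(\Om)$, yet fast enough that in the Euler--Lagrange equation the extra term is an asymptotically negligible perturbation. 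Direct Methods, exactly as in Section \ref{subs:existence}, produce a minimiser $v_p$ of $\tilde E_p$ in $\mathscr{L}^{2s,p}_{w,u_0}(\Om)$.

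Using $u$ as a competitor gives $\tilde E_p(v_p)\le\tilde E_p(u)=E_p(u)\le e_\infty$, hence uniform bounds on $E_p(v_p)$ and on $\alpha_p\Phi(v_p-u)$. The equi-coercivity and $\Gamma$-convergence of Section \ref{subs:existence}, combined with strict convexity of $\Phi$, force $v_p\rightharpoonup u$ weakly in $\mathscr{L}^{2s,k}_{w,u_0}(\Om)$ for every $k$, together with $E_p(v_p)\to e_\infty$. In particular $(-\Delta)^s v_p\rightharpoonup (-\Delta)^s u$ weakly in $L^k_{\mathrm{loc}}(\R^n)$, which is the replacement for \eqref{weak conv Lapl}.

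The Euler--Lagrange equation for $\tilde E_p$ reads
$$
\int_{\R^n} \tilde f_p\,(-\Delta)^s\varphi\,dx = -\alpha_p\,\langle D\Phi(v_p-u),\varphi\rangle,\qquad\forall\varphi\in\mathscr{L}^{2s,p}_{w,0}(\Om),
$$
with $\tilde f_p:=\tilde e_p^{1-p}w|(-\Delta)^s v_p|^{p-2}(-\Delta)^s v_p$ and $\tilde e_p:=E_p(v_p)$. The Hölder computation \eqref{fp bdd} still yields $\|\tilde f_p\|_{L^1(\R^n)}\le 1$, and with a suitable choice of $\alpha_p=o(1)$ the right-hand side is uniformly small on test functions $\varphi\in C^\infty_c(\Om)$ with $\|(-\Delta)^s\varphi\|_\infty$ controlled. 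Hence $\tilde f_p$ is \emph{asymptotically} very weakly $s$-harmonic in $\Om$, and the interior Sobolev/Hölder estimates of \cite{CCLP,BLS} apply to deliver, up to a subsequence, $\tilde f_p\to f$ locally uniformly in $\Om$ and $\tilde f_p\overset{*}{\rightharpoonup}\mu\in\mathcal M(\R^n)$ with $\mu\mres\Om=f\mathscr{L}^n$ and $f$ real analytic in $\Om$. Non-triviality of $f$ is obtained verbatim as in Lemma \ref{non triv lemma}: test the approximate Euler--Lagrange equation with $v_p-w_\eps$, where $w_\eps$ is the corrected extension furnished by Corollary \ref{bc}, and pass to the limit using $\tilde e_p\to e_\infty$, $|\mu|(\R^n)\le 1$, and $\alpha_p\Phi(v_p-u)\to 0$. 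Finally, \eqref{PDE u} follows from the identity
$$
|\tilde f_p|^{\frac{1}{p-1}}\mathrm{sgn}(\tilde f_p)=\tilde e_p^{-1}w^{\frac{1}{p-1}}(-\Delta)^s v_p\qquad\text{in }\Om,
$$
restricted to compact subsets of $\Om\setminus f^{-1}(0)$, exactly as in the derivation of \eqref{PDE u infty} at the end of Section \ref{subs: PDE derivation}.

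The main obstacle is the correct calibration of the penalisation: $\alpha_p\Phi$ must be simultaneously strong enough to select $u$ among all minimisers (ensuring $v_p\to u$), weak enough that the perturbation term in the Euler--Lagrange equation vanishes in the limit on the class of admissible test functions, and compatible with the Calderón--Zygmund/regularity framework of \cite{AFLY,CCLP,BLS} used to extract the analytic limit $f$. Once this calibration is made, every step of Sections \ref{subs:existence}--\ref{subs: PDE derivation} transfers mutatis mutandis to the perturbed problem, which is precisely what Lemma \ref{necessity of PDE} asserts.
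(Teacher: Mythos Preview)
Your overall scheme is right in spirit, but the calibration you describe cannot be made to work, and the paper resolves the tension in a different way. With a vanishing penalisation coefficient $\alpha_p\to 0$, the $\Gamma$-limit of $\tilde E_p$ is simply $E_\infty$ again, so the Fundamental Theorem of $\Gamma$-convergence only tells you that $v_p$ subconverges to \emph{some} minimiser of $E_\infty$, not to the preassigned $u$; strict convexity of $\Phi$ is irrelevant once its coefficient has gone to zero. Quantitatively, comparing with $u$ yields only $\alpha_p\,\Phi(v_p-u)\le e_\infty-e_p$, and without a priori control on the rate $e_\infty-e_p$ you cannot choose $\alpha_p\to 0$ slowly enough to force $\Phi(v_p-u)\to 0$. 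Conversely, if $\alpha_p$ stays bounded away from zero, the right-hand side of your Euler--Lagrange identity does not vanish, so $\tilde f_p$ is not asymptotically $s$-harmonic and the interior estimates of \cite{CCLP,BLS} (which are stated for genuine very weak $s$-harmonic functions, not for approximate ones) do not apply directly.

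The paper's proof takes the penalisation with \emph{fixed} weight, namely $A_p(v)=E_p(v)+\tfrac12\fint_\Om|v-u|^2$. Then $\tfrac12\fint_\Om|v_p-u|^2\le e_\infty-e_p\to 0$ immediately gives $v_p\to u$. The price is that the Euler--Lagrange equation reads $(-\Delta)^s g_p + (v_p-u)=0$ with a non-vanishing zero-order term; the key device you are missing is to absorb this term by solving the auxiliary Dirichlet problem $(-\Delta)^s h_p=v_p-u$ in $\Om$, $h_p=0$ outside, so that $g_p+h_p$ is genuinely very weakly $s$-harmonic and the estimates of \cite{CCLP,BLS} apply to it. Since $v_p\to u$ in $L^q(\Om)$, Calder\'on--Zygmund gives $h_p\to 0$ in $W^{s,q}$, and hence $g_p\to g_\infty$ locally uniformly. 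The remaining steps (analyticity, non-triviality via Corollary~\ref{bc}, and passage to the limit in $|g_p|^{1/(p-1)}\mathrm{sgn}\,g_p$) then go through exactly as you outline.
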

\begin{proof}
Let $u\in\mathcal{W}^{2s,\infty}_{u_0}(\Om)$ be a minimiser of $E_\infty$. For any $p\geq2$, we consider the auxiliary functional
$$
A_p(v):=E_p(v)+\frac{1}{2}\fint_\Om|v-u|^2,\,\qquad v\in \mathscr{L}^{2s,p}_{w,u_0}(\Om),
$$
where the functional $E_p$ is defined in \eqref{Ep}.\\
Using the Direct Method, as we did in Section \ref{subs:existence}, it is not difficult to see that $A_p$ attains a minimum point. Let $v_p\in \mathscr{L}^{2s,p}_{w,u_0}(\Om)$ be a minimiser of $A_p$ and $u_p$ be a minimiser of $E_p$ in the same space. We have
$$
e_p=E_p(u_p)\leq E_p(v_p)\leq A_p(v_p)\leq A_p(u)=E_p(u)\leq E_\infty(u)=e_\infty.
$$
Since $e_p\to e_\infty$, we have 
\begin{align}\label{Ap limit}
\lim_{p\to\infty}E_p(v_p)=\lim_{p\to\infty}A_p(v_p)=e_\infty.
\end{align}
 Now, if $(v_p)_p$ is a sequence of minimisers of $A_p$, by the Calderon-Zygmund estimates, we can prove as in Section \ref{subs:existence} that $(v_p)_p$ is bounded in $\mathscr{L}^{2s,q}_{w,u_0}(\Om)$ for any fixed $q>1$. So, up to considering a subsequence, we have $v_{p}\rightharpoonup v_\infty$ weakly in $ \mathscr{L}^{2s,q}_{w,u_0}(\Om)$ for some $v_\infty\in\mathscr{L}^{2s,q}_{w,u_0}(\Om)$.\\
Let us prove that $v_\infty=u$ in $\R^n$: Passing to the limit as $p\to\infty$ in the following inequality
$$
e_{p}+\frac{1}{2}\fint_\Om|v_{p}-u|^2\,\leq A_{p}(v_{p}),
$$
by \eqref{Ap limit}, we get
$$
e_\infty+\frac{1}{2}\fint_\Om|v_\infty-u|^2\,\leq e_\infty,
$$
which necessarily implies $v_\infty=u$ in $\Om$, and therefore in $\R^n$, since they share the same data in the complement of $\Om$.\\
Now, denote $a_p:=E_p(v_p)$ and write down the Euler-Lagrange equation satisfied by $v_p$:
\begin{align}\label{EL for vp}
(-\Delta)^s g_p+v_p-u=0\qquad\mbox{in }\left(\mathscr{L}^{2s,p}_{w,0}(\Om)\right)^*,
\end{align}
 where $g_p$ is defined by 
\begin{align}\label{gp}
g_p:=a_p^{1-p}w|(-\Delta)^sv_p|^{p-2}(-\Delta)^sv_p\qquad\mbox{a.e. in }\R^n.
\end{align}
 We can prove that $g_p$ is well defined and uniformly bounded in $L^1(\R^n)$ as showed for $f_p$ in Section \ref{subs:existence}.
Moreover, by solving the Dirichlet problem
\begin{equation}\nonumber\left\{
\begin{aligned}
(-\Delta)^s h_p&=v_p-u\quad&&\mbox{in }\Om,\\
h_p&=0&&\mbox{in } \R^n\setminus\Om,
\end{aligned}
\right.
\end{equation}
and applying Thorem \ref{CZ thm}, the unique solution $h_p$ is such that
$$
\|h_p\|_{W^{s,q}(\R^n)}\leq C\|v_p-u\|_{L^q(\Om)}.
$$
In particular, the function $g_p+h_p\in L^1(\R^n)$ satisfies
$$
(-\Delta)^s (g_p+h_p)=0\qquad\mbox{in }\left(\mathscr{L}^{2s,p}_{w,0}(\Om)\right)^*.
$$
Hence, $g_p+h_p$ is a very weak solution to the Fractional Laplace problem in $\Om$ (w.r.t. its own Dirichlet boundary data). By repeating the same argument of Section \ref{subs: PDE derivation}, elliptic estimates imply that $g_p+h_p$ is actually a classical solution. In particular, for every ball $B\Subset\Om$, there exists $\sigma\in(0,1)$ such that
$$
\|g_p+h_p\|_{C^{0,\sigma}(B)}\leq C\left\|g_p+h_p\|_{L^1(\R^n)}\leq C(\|g_p\|_{L^1(\R^n)}+\|v_p-u\|_{L^q(\Om)}\right)\leq C,
$$
for some constant $C=C(\sigma,n,s,B)>0$ independent of $p$ (we are using the $L^1$-bound of $g_p$ and that $v_p\rightharpoonup u$ weakly in $ \mathscr{L}^{2s,q}_{w,u_0}(\Om)$ for any $q>1$). 
Since $h_p\to0$ in $W^{s,q}(\R^n)$ as $p\to\infty$, we have $g_p\to g_\infty$ locally uniformly as $p\to\infty$, for some $g_\infty\in L^1(\Om)$, up to passing to a subsequence. One can guarantee the analiticity of $g_\infty$ as we did for $f_\infty$ in Section \ref{subs: PDE derivation}, since it is the restriction to $\Om$ of an $s$-harmonic measure $\nu\in \mathcal M(\R^n)$ in $\Om$ (the limit measure of $g_p$). Furthermore, as in Lemma \ref{non triv lemma}, we can prove that $g_\infty\not\equiv0$, since the same argument holds even if $(-\Delta)^s g_p\rightharpoonup0$ in the sense of distributions only (which is guaranteed by \eqref{EL for vp}). Finally, since $a_p\to e_\infty$ as $p\to\infty$, we can pass to the limit in \eqref{gp} exactly as we did for $f_p$, obtaining \eqref{PDE u} with $f=g_\infty$.
\end{proof}
We conclude the section with showing the uniqueness of minimisers of $E_\infty$. Consider two minimisers $u_1,u_2\in\mathcal{W}_{u_0}^{2s,\infty}(\Om)$. Then, thanks to Lemma \ref{necessity of PDE} they both satisfy 
\begin{align}\label{u12}
\left|(-\Delta)^s{u_1}\right|=\left|(-\Delta)^s{u_2}\right|=e_\infty\qquad\mbox{ a.e. in }\Om.
\end{align}
Now, notice that the average $\frac{u_1+u_2}{2}\in\mathcal{W}_{u_0}^{2s,\infty}(\Om)$. Hence, we have
$$
\left\|(-\Delta)^s\left(\frac{u_1+u_2}{2}\right)\right\|_{L^\infty(\R^n)}\geq e_\infty.
$$
On the other hand, the triangle inequality gives
$$
\left\|(-\Delta)^s\left(\frac{u_1+u_2}{2}\right)\right\|_{L^\infty(\R^n)}\leq\frac{1}{2}\|(-\Delta)^s{u_1}\|_{L^\infty(\R^n)}+\frac{1}{2}\|(-\Delta)^s{u_2}\|_{L^\infty(\R^n)}=e_\infty.
$$
Therefore, we obtain
$$
\left\|(-\Delta)^s\left(\frac{u_1+u_2}{2}\right)\right\|_{L^\infty(\R^n)}= e_\infty,
$$
giving that $\frac{u_1+u_2}{2}$ is a minimiser as well.
By Lemma \eqref{necessity of PDE}, we have
\begin{align}\label{u ave}
\left|(-\Delta)^s\left(\frac{u_1+u_2}{2}\right)\right|=e_\infty\qquad\mbox{ a.e. in } \Om.
\end{align}
Putting together \eqref{u12} and \eqref{u ave}, the triangle inequality implies $$(-\Delta)^s{u_1}=(-\Delta)^s{u_2}\qquad\mbox{a.e. in }\Om.$$
In particular, the function $v=u_1-u_2$ solves
\begin{equation}\nonumber\left\{
\begin{aligned}
(-\Delta)^sv&=0\quad&&\mbox{in }\Om,\\
v&=0&&\mbox{in }\R^n\setminus\Om.
\end{aligned}
\right.
\end{equation}
By uniqueness of weak solutions to the Dirichlet problem for the Fractional Laplacian, we conclude that $u_1=u_2$ in $\R^n$.

\begin{Remark}[Pointwise representative of $(-\Delta)^su_\infty$]
We conclude this section by observing that $(-\Delta)^su_\infty\in L^\infty(\R^n)$ holds actually in the pointwise sense, not only in the weak one. Indeed, by arguing as in Remark \ref{rem on class}, one can easily infer that $u_\infty-u_0\in W^{2s,p}(\R^n)$ for all $p\in [2,\infty)$. So that $u_\infty\in W^{2s,p}(\R^n)$ as well, since $u_0\in C^{2s+\gamma}_c(\R^n)\subset W^{2s,p}(\R^n)$. In particular, $(-\Delta)^su_\infty\in L^2(\R^n)$ in the usual sense, so it possesses a pointwise representative.
\end{Remark}

\section{More general supremands}\label{sec:gen}
In this last section, we point out that Theorem \ref{main thm} can be generalised to supremands of the form $F(x,(-\Delta)^s u(x))$, where $F:\R^n\times\R\to\R$ is a Carathéodory function such that, for almost every $x\in\R^n$, $F(x,0)=0$, $F(x,\cdot)$ is of class $C^1$, and $|F(x,\cdot)|$ is strictly convex. Assume also:
\begin{equation}\label{ass F}
\begin{aligned}
& \exists c>0:\quad c\leq F_\xi(x,\xi)\leq\frac1c \qquad\mbox{a.e. }x\in\R^n,\, \forall\xi\in\R.
\end{aligned}
\end{equation}
More explicitely, by suitably modifying the proof in Section \ref{sec:proofs} according to the assumptions on $F$ (in the same spirit of \cite{KM}), one obtains the following result for the functional
$$
E_\infty(u):=\|F\left(\cdot,(-\Delta)^s u\right)\|_{L^\infty(\R^n)}.
$$

\begin{Theorem}\label{main thm bis}
Fix $s\in (0,1)$ and $n\in\N$, $n>2s$. Let $\Om\subset\R^n$ be an open bounded set and assume that $u_0\in C^{2s+\gamma}_c(\R^n)$, for some $\gamma>0$, with $u_0\not\equiv0$ in $\R^n\setminus\Om$.
Then the problem
$$
e_\infty:=\inf_{\mathcal{W}_{u_0}^{2s,\infty}(\Om)}E_\infty
$$
admits a unique solution $u_\infty\in\mathcal{W}_{u_0}^{2s,\infty}(\Om)$. \\
In particular, $(-\Delta)^s u_\infty\in C^\gamma_{\mathrm{loc}}(\R^n\setminus\overline\Om)$ and $(-\Delta)^su_\infty(x)\to0$ as $|x|\to+\infty$.\\
Moreover, a system of PDEs can be derived as a necessary and sufficient condition for the minimality of $u_\infty$. Explicitely, there exists a measure $\mu\in\mathcal{M}(\R^n)$, $\mu\neq0$, with compact support and $|\mu|(\R^n)\leq\frac1c$, such that $\mu$ is $s$-harmonic in $\Om$ and 
\begin{align}\label{PDE u infty bis 1}
F(\cdot,(-\Delta)^su_\infty)=e_\infty\frac{d\mu}{d|\mu|}\qquad\mbox{in supp}|\mu|\setminus\partial\Om.
\end{align}
The identity above is understood between $L^\infty$-functions on supp$|\mu|\setminus\partial\Om$.\\
Moreover, the restriction $\mu\mres\Om$ is absolutely continuous w.r.t. the Lebesgue measure on $\Om$, i.e. $\mu\mres\Om=f_\infty\mathscr{L}^n\mres\Om$, for some function $f_\infty\in L^1(\Om)\setminus\{0\}$, which is real analytic in $\Om$. In particular, there holds
\begin{align}\label{PDE u infty bis 2}
F(\cdot,(-\Delta)^su_\infty)=e_\infty\mathrm{sgn}f_\infty\qquad\mbox{a.e.  in }\,\Om .
\end{align}
\end{Theorem}

\begin{Remark}
The assumptions on $F$ may seem restrictive at first, however, due to the $L^\infty$-structure of the problem, this shares the same solutions with any other problem associated to a reparametrised supremand of the form $g\circ F$, for any lower semicontinuous, strictly increasing function $g:\R\to\R$ (compare \cite{KM,CKM}). Thus, the result of Theorem \ref{main thm bis} actually applies for a wider class of supremands. 
\end{Remark}

\textit{Proof of Theorem \ref{main thm bis}}:\\
\noindent
\textit{i) Strict positivity of $e_\infty$}. Assume by contradiction that $e_\infty=0$. Then, there exists a sequence $(u_k)_k\subset\mathcal{W}_{u_0}^{2s,\infty}(\Om)$ such that $F(\cdot,(-\Delta)^su_k)\to0$ uniformly in $\R^n$. From the assumptions on $F$, we must have
$$
(-\Delta)^su_k\to0\qquad\mbox{uniformly in }\R^n.
$$
Thus, we conclude as in \ref{subsec strict}.\\

\noindent
\textit{ii) Existence.} For $p>1$ and $w$ as in Section \ref{subs:existence}, minimise the functional
\begin{align}\label{Ep bis}
E_p(u):=\left(\int_{\R^n}\left|F(x,(-\Delta)^su(x))\right|^pw(x)dx\right)^{\frac{1}{p}}
\end{align}
on $\mathscr{L}^{2s,p}_{w,u_0}(\Om)$.\\
By integration of \eqref{ass F}, we have
\begin{align}\label{F xi}
c|\xi|\leq F(x,\xi)\leq\frac{|\xi|}c \qquad\mbox{a.e. }x\in\R^n,\, \forall\xi\in\R.
\end{align}
This ensures that $E_p$ is well defined on $\mathscr{L}^{2s,p}_{w,u_0}(\Om)$ and (weakly) coercive. In addition, by convexity of $|F(x, \cdot)|$, $E_p$ is also weakly lower semicontinuous in $\mathscr{L}^{2s,p}_{w}(\R^n)$. Thus, Direct Methods provide the existence of a minimiser $u_p$. The Gamma Convergence argument applies as in Section \ref{subs:existence}, giving the existence of a minimiser $u_\infty\in\mathcal{W}_{u_0}^{2s,\infty}(\Om)$ of $E_\infty$. Up to passing to a subsequence, as $p\to\infty$, we have 
\begin{equation}\nonumber
\left\{
\begin{aligned}
& u_p\rightharpoonup u_\infty \mbox{ weakly in }\mathscr{L}^{2s,q}_{w,u_0}(\Om),\\
& e_p:=E_p(u_p)\to E_\infty(u_\infty)=e_\infty.
\end{aligned}
\right.
\end{equation}
\textit{iii) PDE derivation.} The Euler-Lagrange equation satisfied by $u_p$ is
\begin{align}\label{EL for Ep bis}
\int_{\R^n}f_p(x)(-\Delta)^sv(x)\, dx=0,\quad \forall v\in \mathscr{L}^{2s,p}_{w,0}(\Om),
\end{align}
where $f_p:\R^n\to\R$ is defined by
\begin{align*}\label{fp}
f_p:=e_p^{1-p}w|F(\cdot, (-\Delta)^su_p)|^{p-2}F(\cdot, (-\Delta)^su_p) F_\xi(\cdot, (-\Delta)^su_p).
\end{align*}
By using \eqref{F xi}, we obtain
$$
\int_{\R^n}|f_p|\leq\frac 1c,
$$
thus $(f_p)_p$ is bounded in $L^1(\R^n)$. Moreover, $f_p$ is a very weak $s$-harmonic function on $\Om$. So, up to extracting a subsequence, it converges in the sense of measure to an $s$-harmonic measure $\mu\in\mathcal M(\R^n)$ with $|\mu|(\R^n)\leq \frac 1c$.\\
 Moreover, by elliptic estimates, as in Section \ref{subs: PDE derivation}, we have that $f_p$ converges uniformly in $\Om$ to a real analytic function $f_\infty\in L^1(\Om)$. Of course, $\mu\mres\Om=f_\infty\mathscr L^n\mres\Om$.\\
Let us show the non-triviality of $f_\infty$. Let $w_\eps$ be as in Corollary \ref{bc}. By testing \eqref{EL for Ep bis} with $u_p-w_\eps$, using that sgn$F(\cdot, (-\Delta)^su_p)=$ sgn$(-\Delta)^su_p$ and that $F_\xi(\cdot,\xi)|\xi|\geq c^2|F(x,\xi)|$, we have
$$
\int_{\R^n}f_p (-\Delta)^sw_\eps=\int_{\R^n}f_p  (-\Delta)^su_p\geq c^2e_p^{1-p}\int_{\R^n}w |F(\cdot, (-\Delta)^su_p)|^p=c^2 e_p.
$$
So that, passing to the limit as $p\to \infty$ and using Corollary \ref{bc}, we obtain
\begin{align*}
c^2e_\infty=\int_{\R^n}(-\Delta)^sw_\eps\,d\mu\leq\int_\Om f_\infty(-\Delta)^sw_\eps+\frac\eps c,
\end{align*}
providing $f_\infty\not\equiv0$ in $\Om$, for $\eps$ small enough.\\
To derive \eqref{PDE u infty bis 2}, we write
$$
|f_p|^\frac{1}{p-1}\mathrm{sgn}(f_p)=e_p^{-1}w^\frac{1}{p-1}F(\cdot,(-\Delta)^su_p)F_\xi(\cdot,(-\Delta)^su_p)^{\frac 1{p-1}}\qquad\mbox{in }\Om.
$$
Thanks to \eqref{F xi}, $F_\xi(\cdot,(-\Delta)^su_p)$ is bounded from above and below, so we can pass to the limit in the previous equation obtaining \eqref{PDE u infty bis 2}.\\
We can argue similarly to Section \ref{subs:existence} to derive \eqref{PDE u infty bis 1}.\\

\noindent
\textit{iv) Uniqueness.} As in Section \ref{subs: uniqueness}, we can prove that any minimiser $u\in \mathcal{W}_{u_0}^{2s,\infty}(\Om)$ must satisfy
\begin{align*}
F(\cdot,(-\Delta)^su)=e_\infty\mathrm{sgn}f\qquad\mbox{a.e.  in }\,\Om,
\end{align*}
for some $f\in L^1(\Om)$ real analytic. \\
Now, let $u_1,u_2\in \mathcal{W}_{u_0}^{2s,\infty}(\Om)$ be two minimisers. Then
$$
|F(\cdot,(-\Delta)^su_1)|=|F(\cdot,(-\Delta)^su_2)|=e_\infty\qquad \mbox{a.e. in } \Om.
$$
By reasoning as in Section \ref{subs: uniqueness}, the strict convexity of $|F(x,\cdot)|$ allows to conclude
$$(-\Delta)^s{u_1}=(-\Delta)^s{u_2}\qquad\mbox{a.e. in }\Om.$$
So that $u_1=u_2$ in $\R^n$ by the uniqueness of the fractional Dirichlet problem. \qed
\\

\textsc{Acknowledgements:} S.C. is grateful to Lorenzo Brasco, Alessandro Carbotti, Serena Dipierro and Enrico Valdinoci for useful discussion and suggestions.  S.C and R.M. acknowledge partial financial support through the EPSRC grant EP/X017206/1. S.C acknowledges partial financial support through the EPSRC grant EP/X017109/1. 
S.C. is a member of Gruppo Nazionale per
l’Analisi Matematica, la Probabilità e le loro Applicazioni (GNAMPA) of the Istituto Nazionale
di Alta Matematica (INdAM) of Italy.\\

\textsc{Data availability}: Our manuscript has no associated data.\\

\textsc{Conflict of interest}: The authors have no Conflict of interest to declare for this article.


\vspace{3mm}
\textsc{Simone Carano}\\
Department of Mathematical Sciences, University of Bath\\
Bath BA2 7AY, UK\\
E-mail: sc3705@bath.ac.uk\\


\textsc{Roger Moser}\\
Department of Mathematical Sciences, University of Bath\\
Bath BA2 7AY, UK\\
E-mail: r.moser@bath.ac.uk

\end{document}